\xpatchcmd{\@thm}{\fontseries\mddefault\upshape}{}{}{} % same font as thm-header
\def\l@subsection{\@tocline{2}{0pt}{2pc}{5pc}{}}
\newcommand{\nocontentsline}[3]{}
\let\origcontentsline\addcontentsline
\newcommand\stoptoc{\let\addcontentsline\nocontentsline}
\newcommand\resumetoc{\let\addcontentsline\origcontentsline}
\newtheorem{theorem}{Theorem}[section]
\newtheorem{lemma}[theorem]{Lemma}
\newtheorem{proposition}[theorem]{Proposition}
\newtheorem{corollary}[theorem]{Corollary}
\theoremstyle{definition}
\newtheorem{definition}[theorem]{Definition}
\newtheorem{remark}[theorem]{Remark}
\newtheorem{example}[theorem]{Example}
\newcommand{\R}{\mathbb{R}}
\newcommand{\C}{\mathbb{C}}
\newcommand{\N}{\mathbb{N}}
\newcommand{\Z}{\mathbb{Z}}
\newcommand{\Q}{\mathbb{Q}}
\newcommand{\acts}{\curvearrowright}
\newcommand{\dual}[1]{{#1}^{\wedge}}
\newcommand{\bidual}[1]{{#1}^{\wedge \wedge}}
\newcommand{\vect}[1]{\mathbf{#1}}
\title{Expansivity of algebraic semigroup actions}
\author{Miguel Donoso-Echenique}
\address{University of M{\"u}nster, Einsteinstra{\ss}e 62, M{\"u}nster 48149, Germany}
\email{mdonosoe@uni-muenster.de,migueldonosoe@gmail.com}
\subjclass[2020]{Primary 20M30, 43A20; Secondary 20M25, 43A15.}
\begin{document}
	
\maketitle
	
\begin{abstract}
		For a semigroup $S$ and a right $\Z[S]$-submodule $J\leq \Z[S]^n$, we study expansivity of the algebraic action of $S$ induced on the Pontryagin dual of $\Z[S]^n/J$. We completely determine the class of semigroups for which expansivity of this action is characterized by the triviality of $J^\perp$, in terms of the existence of a finite subset $K\subseteq S$ such that $S=KS$. This condition is satisfied in particular by every monoid, and more generally by every semigroup with a left unital convolution Banach algebra, in which case we are able to extend the characterization of expansivity in terms of an invertibility condition when $J$ is finitely generated. We also exhibit examples of non-unital semigroups satisfying the hypotheses of our results.
\end{abstract}

\setcounter{tocdepth}{2}	
\tableofcontents{}

\section{Introduction}
\thispagestyle{empty}
Given a countable group  $G$ and a compact Hausdorff abelian group $X$, an action $G\acts X$ by continuous automorphisms is called an \textit{algebraic action}. There is a correspondence---via Pontryagin duality---between algebraic actions of a group $G$ and $\Z[G]$-modules, where $\Z[G]$ is the \textit{integral ring} of $G$. More precisely, every algebraic action $G\acts X$ induces a $\Z[G]$-module structure on the Pontryagin dual $\dual{X}$, and every discrete $\Z[G]$-module $M$ gives rise to an algebraic action $G\acts \dual{M}$. Of particular importance are algebraic actions arising from finitely generated $\Z[G]$-modules, namely, actions of $G$ by continuous group automorphisms of the metrizable space $\dual{(\Z[G]^n/J)}$, where $J\leq \Z[G]^n$ is a $\Z[G]$-submodule.

One dynamical property of such actions that has been a subject of interest for several decades now is \textit{expansivity}, i.e., the ability to distinguish any two different elements up to a fixed distance by simultaneously translating them with a suitable group element. This property is relevant, for instance, in the study of entropy of group actions \cite{bowen2011entropy,deninger2007expansive}. The research on expansivity of algebraic actions was initiated in \cite{kitchens1989automorphisms} for $G$ free abelian, see \cite{schmidt2012dynamical} for a thorough exposition. In \cite{einsiedler2001algebraic} the authors deal with more general groups and provide a characterization of expansivity of the action $G\acts \dual{(\Z[G]/J)}$ in terms of the triviality of the annihilator $J^\perp\leq \ell^\infty(S)$, viewing the ideal $J\leq \Z[G]$ as a subset of $\ell^1(S)$. When $J=(a)$ is the principal ideal generated by $a\in \Z[G]$, it is proven in \cite{deninger2007expansive} that $G\acts \dual{(\Z[G]/(a))}$ is expansive if and only if $a$ is invertible in $\ell^1(G)$. In \cite[\S 3]{chung2015homoclinic} the authors conclude an analogous result for finitely generated $\Z[G]$-submodules of $\Z[G]^n$, see also the exposition in \cite[Chapter 13]{KerrLi}.

Algebraic actions of semigroups---that is, actions of countable semigroups by endomorpshims of a compact Hausdorff abelian group---have been recently explored \cite{bruce2023algebraic,BRUCE2024110263}. We will be interested in studying the action $S\acts \dual{(\Z[S]^n/J)}$, where $J\leq \Z[S]^n$ is a right $\Z[S]$-module. It will be shown in \S\ref{sec:algebraic_actions_basics} that the topological group $\dual{(\Z[S]^n/J)}$ is isomorphic to a subgroup $X_J$ of $(\mathbb{T}^S)^n$, which is metrizable, and the action $S\acts X_J$ arising from the restriction shift action $S\acts (\mathbb{T}^S)^n$ will be called the \textbf{algebraic action induced by} $J$. We aim to study, for these actions, the property of expansivity, which we formally define now.

\begin{restatable}{definition}{expansivitydef}
		A continuous action $S\acts X$ on a compact metric space $(X,d)$ is called \textbf{expansive} if there exists $\epsilon>0$ (the \textbf{expansivity constant}) such that for every pair of distinct elements $x,y\in X$ we have that $\sup_{s\in S}d(s\cdot x,s\cdot y)\geq \epsilon$.
\end{restatable}

To our knowledge, there has been no research around expansivity of algebraic semigroup actions $S\acts X_J$. Some complications arise when we replace a group $G$ by a semigroup $S$. For instance, $S$ could have no identity element, which seems to play a role on expansivity as shift actions of semigroups on finite alphabets can be non-expansive. Even further, it might happen that $\ell^1(S)$ is a non-unital Banach algebra, or might not even admit an approximate identity, which prevents us from speaking of invertibility. Another problem is that the language in which the results from \cite{chung2015homoclinic,deninger2007expansive,einsiedler2001algebraic} are framed involves the Banach $^*$-algebra structure of $\ell^1(G)$, a feature that gets lost when passing to semigroups.

The main purpose of this article is to explore the extent to which the characterizations of expansivity of algebraic group actions, as well as the techniques used to prove them, hold up when we replace the acting group by a semigroup. Our first main result completely determines the class of semigroups for which expansivity of the algebraic action $S\acts X_J$ is characterized by the triviality of the annihilator of $J\leq \Z[S]^n$, viewing $J$ as a subset of $\ell^1(S)^n$. This characterization is analogous to the equivalence between (i) and (ii) in \cite[Theorem 8.1]{einsiedler2001algebraic} and that between (1) and (2) in \cite[Lemma 3.8]{chung2015homoclinic}.

\begin{restatable}{thm}{theoremA}\label{thm:meta_characterization}
	Let $S$ be a semigroup. The following statements are equivalent: 
	\begin{enumerate}
		\item[\textup{(i)}] There exists a finite subset $K\subseteq S$ such that $S=KS$.
		\item[\textup{(ii)}] For every right $\Z[S]$-submodule $J\leq \Z[S]^n$, the action $S\acts X_J$ is expansive if and only if $J^\perp=\{0\}$.
	\end{enumerate}
\end{restatable}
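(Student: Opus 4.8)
The plan is to first reduce expansivity to a statement about a single orbit. Since $S$ acts on the compact metrizable group $X_J \subseteq (\mathbb{T}^S)^n$ by continuous endomorphisms and the metric $d$ from \S\ref{sec:algebraic_actions_basics} is translation invariant, one has $d(s\cdot x, s\cdot y) = d(s\cdot(x-y), 0)$, so the action is expansive if and only if there is $\epsilon>0$ with $\{z \in X_J : \sup_{s\in S} d(s\cdot z, 0) < \epsilon\} = \{0\}$. Fixing the enumeration $S = \{s_1, s_2, \dots\}$ used to define $d$, the condition $\sup_s d(s\cdot z, 0) < \epsilon$ controls each coordinate of $z$ on $s_i S$ with weight $2^{-i}$, and I will use throughout that $J^\perp \le \ell^\infty(S)^n$ is a left $S$-submodule for the dual shift $(s\cdot w)(t) = w(ts)$, since $\langle j, s\cdot w\rangle = \langle j\cdot s, w\rangle$ and $J$ is a right submodule.

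For the implication (i)$\Rightarrow$(ii), the direction ``$J^\perp \neq \{0\}\Rightarrow$ not expansive'' holds for every semigroup and needs no hypothesis: given $0\neq w \in J^\perp$, the points $z_\delta$ obtained by reducing $\delta w$ modulo $1$ coordinatewise lie in $X_J$ (because $\langle j, \delta w\rangle = 0$), are nonzero for small $\delta\neq 0$, and satisfy $\sup_s d(s\cdot z_\delta, 0) \le C\delta\|w\|_\infty \to 0$ since the shift is norm nonincreasing on $\ell^\infty(S)^n$. The substantive direction is ``$J^\perp = \{0\}\Rightarrow$ expansive'', proved contrapositively. Note first that (i) forces $S = S^2$, as $S = KS \subseteq SS \subseteq S$. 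If the action is not expansive, choose $x_m \in X_J\setminus\{0\}$ with $\sup_s d(s\cdot x_m, 0) < 1/m$. Let $\tilde x_m$ be the coordinatewise lift of $x_m$ to $(-1/2,1/2]$, a nonzero element of $\ell^\infty(S)^n$. Pick $u_m$ with $|\tilde x_m(u_m)| > \tfrac12\|\tilde x_m\|_\infty$; using $S = KS$ write $u_m = k_m t_m$ with $k_m\in K$, $t_m\in S$, and replace $x_m$ by $x_m' := t_m\cdot x_m \in X_J$, which still has $\sup_s d(s\cdot x_m', 0) < 1/m$ and whose lift attains its near-maximum at the coordinate $k_m$. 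Normalizing, $w_m := \tilde{x_m'}/\|\tilde{x_m'}\|_\infty$ has $\|w_m\|_\infty = 1$ and $|w_m(k_m)| > 1/2$.

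The key point is that each $w_m$ is an approximate annihilator of $J$ with a threshold depending only on $j$. Because $S = S^2$, any fixed $j\in J$ has $\operatorname{supp}(j)\subseteq \bigcup_{i\le I(j)} s_i S$ for some finite $I(j)$; the orbit bound gives $|x_m'(t)| < 2^{I(j)}/m$ on this set, while $\langle j, x_m'\rangle = 0$ in $\mathbb{T}$ forces $\langle j, \tilde{x_m'}\rangle \in \Z$, so $|\langle j, \tilde{x_m'}\rangle| \le \|j\|_1\, 2^{I(j)}/m < 1$ and hence $\langle j, w_m\rangle = 0$ once $m > \|j\|_1\, 2^{I(j)}$. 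Passing to a subsequence with $k_m \equiv k_0\in K$ (possible as $K$ is finite) and extracting a weak-$*$ limit $w$ of $(w_m)$ in $\ell^\infty(S)^n = (\ell^1(S)^n)^*$, I obtain $\langle j, w\rangle = 0$ for every $j\in J$, so $w\in J^\perp$, while weak-$*$ continuity of evaluation gives $|w(k_0)|\ge 1/2$, so $w\neq 0$, i.e. $J^\perp \neq \{0\}$. The main obstacle is precisely this last step: relocating the peak of $w_m$ into a fixed finite set so that the weak-$*$ limit does not vanish — exactly what $S=KS$ provides and what must fail in general.

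For (ii)$\Rightarrow$(i) I argue contrapositively, building, whenever no finite $K$ satisfies $S=KS$, a right submodule $J\le\Z[S]$ with $J^\perp=\{0\}$ for which $S\acts X_J$ is not expansive. Setting $K_m=\{s_1,\dots,s_m\}$, the failure of (i) yields $u_m \in S\setminus K_m S$. Let $x_m\in(\tfrac12\Z/\Z)^S$ be the $2$-torsion point supported at $u_m$ with value $1/2$, let $Y\le(\tfrac12\Z/\Z)^S$ be the closed $S$-invariant subgroup generated by all the $x_m$, and let $J=\{j\in\Z[S] : \langle j, y\rangle = 0\ \forall y\in Y\}$, a right submodule with $X_J=Y$. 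Since $u_m\notin K_m S$, the value $(s\cdot x_m)(s_i)=\tfrac12[s_i s = u_m]$ vanishes for all $i\le m$, giving $\sup_s d(s\cdot x_m,0)\le 2^{-m-1}\to0$; as each $x_m\neq0$, the action is not expansive. On the other hand $Y$ consists of $2$-torsion elements, so $2\Z[S]\subseteq J$, whence the real span of $J$ is dense in $\ell^1(S)$ and $J^\perp=\{0\}$. The delicate part here is arranging both requirements at once — density of $J$, killing every bounded annihilator, together with persistent nonzero points of vanishing orbit diameter — which is made possible precisely by the escape of the $u_m$ from every $K_mS$.
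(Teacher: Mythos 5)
Your proof is correct. For (ii)$\Rightarrow$(i) you essentially reproduce the paper's counterexample: both arguments use $2$-torsion points supported at elements $u_m\in S\setminus K_mS$; the paper simply takes $J=2\Z[S]$ (so $X_J$ is the full group $\{x:x(s)\in\{0,1/2\}\}$), whereas you pass to the closed invariant subgroup $Y$ generated by the $x_m$ --- an extra step that is harmless but unnecessary, since the $x_m$ already lie in $X_{2\Z[S]}$ and $(2\Z[S])^\perp=\{0\}$. The genuine divergence is in the implication ``$J^\perp=\{0\}\Rightarrow$ expansive''. The paper factors it through finitely generated submodules: $J^\perp=\{0\}$ yields a finitely generated $\omega=\mathbf{A}\Z[S]^k\le J$ with $\omega^\perp=\{0\}$ (a compactness argument over the net of such $\omega$, with the witnesses' peaks relocated into $K$), and then a direct quantitative estimate shows $S\acts X_\omega$, hence $S\acts X_J$, is expansive with explicit constant $(2^{r+1}\|\mathbf{A}\|_1)^{-1}$. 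You instead prove the contrapositive in one stroke: from nonzero points of orbit diameter $<1/m$ you build normalized real lifts $w_m$ whose peaks are relocated into $K$ via $S=KS$, observe that integrality of $\langle j,\tilde{x}_m'\rangle$ together with the orbit bound forces $\langle j,w_m\rangle=0$ once $m$ exceeds a threshold depending on $j$, and extract a nonzero weak-$*$ limit in $J^\perp$. Both arguments hinge on the same relocation trick; yours avoids the detour through finitely generated submodules and is more direct, while the paper's factorization buys an explicit expansivity constant and the intermediate condition (iii) of Proposition \ref{prop:expansivity_matrix_general_version}, which is reused in the invertibility characterization. Two small points to tidy up: for $n>1$ the peak of $\tilde{x}_m$ lives at a pair (coordinate index, semigroup element), so you must also fix the coordinate index along your subsequence before invoking weak-$*$ continuity of evaluation; and in the easy direction you should first reduce to real-valued $w\in J^\perp$ (legitimate since $J\subseteq\Z[S]^n$ is real, so the real and imaginary parts of an annihilator annihilate separately) before reducing $\delta w$ modulo $1$.
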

In the case where $\ell^1(S)$ is left-unital, our second main result extends the characterization in terms of invertibility conditions when $J$ is finitely generated. This stands in close analogy with the equivalence between (1) and (4) in \cite[Lemma 3.8]{chung2015homoclinic}, and follows the techniques therein.
\begin{restatable}{thm}{theoremB}\label{thm:expansivity_invertibility}
	Let $S$ be a semigroup such that $\ell^1(S)$ has a left identity. Then, given $\mathbf{A}\in \mathrm{M}_{k\times n}(\Z[S])$ and a left identity $\mathbf{I}\in \mathrm{M}_n(\ell^1(S))$, the following are equivalent.
	\begin{enumerate}
		\item[\textup{(i)}] The algebraic action $S\acts X_{\mathbf{A}\Z[S]^k}$ is expansive.
		\item[\textup{(ii)}] There exists $\mathbf{B}\in \mathrm{M}_n(\Z[S])$ that is right $\mathrm{Re}\{\mathbf{I}\}$-invertible in $\mathrm{M}_n(\ell^1(S))$ such that $\mathbf{B}\Z[S]^n\subseteq \mathbf{A}\Z[S]^k$.
	\end{enumerate}
	If $\ell^1(S)$ is additionally unital, the above conditions are equivalent to the following.
	\begin{enumerate}
		\item[\textup{(iii)}] There exists $\mathbf{B}\in \mathrm{M}_n(\Z[S])$ invertible in $\mathrm{M}_n(\ell^1(S))$ such that $\mathbf{B}\Z[S]^n\subseteq \mathbf{A}\Z[S]^k$.
	\end{enumerate}
\end{restatable}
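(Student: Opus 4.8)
The plan is to prove Theorem~\ref{thm:expansivity_invertibility} by following the strategy of \cite[Lemma 3.8]{chung2015homoclinic}, adapting it to the one-sided setting forced upon us by a merely left-unital $\ell^1(S)$. The natural starting point is Theorem~\ref{thm:meta_characterization}: since $\ell^1(S)$ being left-unital implies the existence of a left identity, and in particular furnishes a finite $K$ (a singleton, the support of the identity, suffices to give $S=KS$ after the appropriate reduction), the meta-characterization applies and tells us that $S\acts X_{\mathbf{A}\Z[S]^k}$ is expansive if and only if the annihilator $(\mathbf{A}\Z[S]^k)^\perp$ in $\ell^\infty(S)^n$ is trivial. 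Thus I would first reduce (i) to the statement that $(\mathbf{A}\Z[S]^k)^\perp=\{0\}$, which is the concrete analytic condition to work with.

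The heart of the argument is the equivalence between triviality of the annihilator and the existence of the matrix $\mathbf{B}$ in (ii). First I would establish the implication (ii)$\Rightarrow$(i): if $\mathbf{B}$ is right $\mathrm{Re}\{\mathbf{I}\}$-invertible with $\mathbf{B}\Z[S]^n\subseteq \mathbf{A}\Z[S]^k$, then any element annihilating $\mathbf{A}\Z[S]^k$ also annihilates $\mathbf{B}\Z[S]^n$; using the right inverse $\mathbf{C}$ of $\mathbf{B}$ together with the left identity, one recovers the full free module and concludes that the annihilating functional must vanish, so $(\mathbf{A}\Z[S]^k)^\perp=\{0\}$ and the action is expansive. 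This direction is essentially a formal manipulation once the correct notion of $\mathrm{Re}\{\mathbf{I}\}$-invertibility is in place—the real part appearing because the pairing with $\ell^\infty(S)^n$ and the duality with $\T^S$ naturally produces the symmetrization $\mathrm{Re}\{\mathbf{I}\}=\tfrac{1}{2}(\mathbf{I}+\mathbf{I}^*)$ of the left identity.

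The converse (i)$\Rightarrow$(ii) is where I expect the main obstacle. The classical proof in the group case exploits the involution on $\ell^1(G)$ and the self-adjointness of the relevant operator $\mathbf{A}^*\mathbf{A}$, using that triviality of the annihilator forces $\mathbf{A}^*\mathbf{A}$ to be invertible in $\mathrm{M}_n(\ell^1(G))$ via a spectral/positivity argument; one then sets $\mathbf{B}=\mathbf{A}^*\mathbf{A}$ and checks $\mathbf{B}\Z[G]^n\subseteq \mathbf{A}\Z[G]^k$. The difficulty in the semigroup setting is that $\ell^1(S)$ carries no genuine $^*$-algebra involution, so the operator $\mathbf{A}^*\mathbf{A}$ and the positivity machinery that guarantees its invertibility are not directly available. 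The plan is to replace the involution-based positivity argument with a direct analytic argument: expansivity (equivalently triviality of the annihilator) should translate, via the duality developed in~\S\ref{sec:algebraic_actions_basics}, into a uniform lower bound that makes a candidate operator bounded below, and hence right-invertible relative to $\mathrm{Re}\{\mathbf{I}\}$. Concretely, I would try to produce $\mathbf{B}$ as a suitable product involving $\mathbf{A}$ and an appropriate ``adjoint-like'' matrix built from the left-unital structure, verifying the containment $\mathbf{B}\Z[S]^n\subseteq \mathbf{A}\Z[S]^k$ by construction and the invertibility from the lower bound supplied by expansivity.

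Finally, under the additional assumption that $\ell^1(S)$ is unital, the refinement (ii)$\Leftrightarrow$(iii) should be comparatively routine: with a genuine two-sided identity $\mathbf{I}$ one has $\mathrm{Re}\{\mathbf{I}\}=\mathbf{I}$, so right $\mathrm{Re}\{\mathbf{I}\}$-invertibility becomes ordinary right invertibility, and the symmetry of the unital setting upgrades a one-sided inverse to a genuine two-sided inverse in $\mathrm{M}_n(\ell^1(S))$ by a standard argument, yielding full invertibility of $\mathbf{B}$. I would present this as a short corollary of the constructions in the previous paragraphs, so that the bulk of the work remains concentrated in the left-unital case and the delicate replacement of the $^*$-algebra positivity argument.
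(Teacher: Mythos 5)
Your reduction of (i) to the triviality of $(\mathbf{A}\Z[S]^k)^\perp$ via Theorem~\ref{thm:meta_characterization}, and your sketch of (ii)$\Rightarrow$(i), match the paper. But the two hard steps are not carried out, and the routes you propose for them would not work as stated.

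For (i)$\Rightarrow$(ii) you correctly note that the $^*$-algebra positivity machinery is unavailable, but your replacement---extracting a ``uniform lower bound'' from expansivity so that some ``adjoint-like'' operator is bounded below and hence right-invertible---is not a workable mechanism: bounded below on a Banach space gives injectivity and closed range, not surjectivity, and over a general semigroup there is no candidate adjoint-like matrix to begin with. The paper's actual argument (still in the spirit of Chung--Li) is a duality/density one: injectivity of $\varphi_{\mathbf{A}}\colon \vect{f}\mapsto\vect{f}\star\mathbf{A}$ on $\ell^\infty(S)^n$ forces, via Hahn--Banach, the predual map $\psi_{\mathbf{A}}\colon\vect{a}\mapsto\mathbf{A}\ast\vect{a}$ to have \emph{dense image} in $\ell^1(S)^n$; one then approximates the rows of $\mathrm{Re}\{\mathbf{I}\}$ by elements $\mathbf{A}\ast\vect{b}_i$ with $\vect{b}_i\in\Q[S]^k$, clears denominators, and invokes the openness of the set of right $\mathrm{Re}\{\mathbf{I}\}$-invertible elements (a Neumann-series lemma) to conclude that $\mathbf{B}=\mathbf{A}\ast m\mathbf{B}_\epsilon$ works. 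This also explains why $\mathrm{Re}\{\mathbf{I}\}$ appears: not as a symmetrization $\tfrac{1}{2}(\mathbf{I}+\mathbf{I}^*)$ (there is no involution), but because the approximants must have real, then rational, then integral coefficients, so one must target a real left identity.

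The second gap is in the unital case: you propose to ``upgrade a one-sided inverse to a genuine two-sided inverse by a standard argument.'' No such standard argument exists in a general unital Banach algebra; right-invertible elements need not be invertible, and whether they are in $\mathrm{M}_n(\ell^1(S))$ is exactly the direct-finiteness question the paper discusses, and leaves open beyond groups and certain inverse semigroups, in the remark following the theorem. The paper instead proves (i)$\Rightarrow$(iii) by rerunning the density argument to produce $\mathbf{B}\in\mathrm{M}_n(\Z[S])$ \emph{arbitrarily close to} $\mathbf{I}$ itself (using that a two-sided identity satisfies $\mathbf{I}=\mathrm{Re}\{\mathbf{I}\}$), so that $\mathbf{B}$ is honestly invertible because the invertibles form an open neighbourhood of $\mathbf{I}$. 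A smaller inaccuracy: the finite set $K$ with $S=KS$ is not the (possibly infinite) support of the left identity $e$, nor a singleton; as in Lemma~\ref{lemma:unital_integral_ring} it is a finite set carrying all but strictly less than $1$ of the $\ell^1$-mass of $\mathrm{Re}\{e\}$.
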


The article is organized as follows. In \S \ref{sec:preliminaries_notation} we introduce some preliminaries and notation. Section \S \ref{sec:algebraic_actions_basics} deals with the correspondence between algebraic actions of $S$ and right $\Z[S]$-modules arising from Pontryagin duality. The main section of the article is \S \ref{sec:expansivity}, where we prove Theorems \ref{thm:meta_characterization} and \ref{thm:expansivity_invertibility}. Finally, in \S \ref{sec:concrete_semigroups} we study the hypotheses imposed upon $S$ in theorems \ref{thm:meta_characterization} and \ref{thm:expansivity_invertibility}. Since both hypotheses are immediately satisfied when $S$ is a monoid, we study examples of semigroups that are not monoids but still satisfy these conditions, such as Rees matrix semigroups.

\section{Preliminaries and notation}\label{sec:preliminaries_notation}
\stoptoc
\subsection{Semigroups and actions}
A \textbf{semigroup} is a set $S$ together with an associative binary operation $S\times S\to S$, denoted as $(s,t)\mapsto st$. We shall only deal with countable semigroups in this article. A \textbf{monoid} is a semigroup $S$ admitting an identity element, that is, an element $1_S\in S$ such that $1_Ss=s1_S=s$ for all $s\in S$. Standard references regarding the algebraic theory of semigroups are \cite{Clifford,CliffordII}.

Let $S$ be a countable semigroup. An \textbf{action} of $S$ on a set $X$, denoted by $S\acts X$, is a map $S\times X\to X$, $(s,x)\mapsto s\cdot x$, such that for all $s,t\in S$ and $x\in X$ we have $s\cdot (t\cdot x)=st\cdot x$. If $S$ is a monoid, we moreover require that $1_S\cdot x=x$ for all $x\in X$. Given an action $S\acts X$ and $x\in X$, we define $Sx\coloneq\{s\cdot x:s\in S\}$. If $X$ is a topological space, $S\acts X$ is an action and for each $s\in S$ the map $x\mapsto s\cdot x$ is continuous, we say that the action $S\acts X$ is \textbf{continuous}.

Given two actions $S\acts X$ and $S\acts Y$, a map $\phi\colon X\to Y$ is called $S$\textbf{-equivariant} if $\phi(s\cdot x)=s\cdot \phi(x)$ for all $s\in S$ and $x\in X$. A \textbf{conjugacy} between two continuous left actions $S\acts X$ and $S\acts Y$ is an $S$-equivariant homeomorphism $\phi\colon X\to Y$.

\subsection{Product spaces and matrices}\label{prel:product_matrices}
We introduce some notation regarding product spaces and matrix spaces. If $X$ is a set and $n\geq 2$, elements $\vect{x}=(x_1,\dots,x_n)\in X^n$ will be regarded as $1$-row matrices. If $X$ has the structure of an additive group, then for any $x\in X$ and $1\leq i\leq n$, the element $x\mathbf{e}_i\in X^n$ will denote the tuple $(x_1,\dots,x_n)$ with $x_i=x$ and $x_j=0$ for all $j\neq i$.

Matrices in $\mathrm{M}_{n\times k}(X)$ will be denoted with bold capital letters. Let $\vect{x}=(x_1,\dots,x_n)\in X^n$, $\vect{x}'=(x_1',\dots,x_k')\in X^k$ and $\mathbf{A}=(a_{ij})_{i,j}\in \mathrm{M}_{n\times k}(X)$. If $\star \colon X\times R\to X$ is a right $R$-module structure and $\ast\colon R\times X\to X$ is a left $R$-module structure, the expressions $\vect{x}\star \mathbf{A}\in X^k$ and $\mathbf{A}\ast \vect{x}'\in X^n$ are defined in the usual way, that is, the $j$-th coordinate of $\vect{x}\cdot \mathbf{A}\in X^k$ and the $i$-th coordinate of $\mathbf{A}\ast \vect{x}'$ are, respectively,
$$\sum_{i=1}^nx_i\star a_{ij}\quad \text{and}\quad \sum_{j=1}^{k}a_{ij}\ast x_j'.$$

\subsection{The semigroup integral ring and convolution algebra}
Let $S$ be a countable semigroup. In the upcoming sections, we will deal with two Banach spaces associated with $S$. We consider
$$\ell^\infty(S)\coloneqq\left\{f\colon S\to \C\,\Big{|}\, \sup_{t\in S}|f(t)|<\infty\right\}\quad\text{with norm }\|f\|_{\infty}\coloneqq\sup_{t\in S}|f(t)|,$$
and
$$\ell^1(S)\coloneq\left\{a\colon S\to \C\,\bigg{|}\, \sum_{t\in S}|a(t)|<\infty\right\}\quad\text{with norm }\|a\|_{1}\coloneq\sum_{t\in S}|a(t)|.$$
As usual, for every $n\in \N$ we consider the product Banach spaces $\ell^\infty(S)^n$ and $\ell^1(S)^n$ with the corresponding norms defined as
$$\|\vect{f}\|_\infty\coloneq\max_{1\leq i\leq n}\|f_i\|_\infty \quad\text{and}\quad \|\vect{a}\|_1\coloneq\sum_{i=1}^n\|a_i\|_1$$
for all $\vect{f}=(f_1,\dots,f_n)\in \ell^\infty(S)^n$ and $\vect{a}=(a_1,\dots,a_n)\in \ell^1(S)^n$. The $\C$-bilinear map $\langle\cdot\,,\cdot\rangle\colon \ell^\infty(S)^n\times \ell^1(S)^n\to \C$ will denote the \textbf{canonical dual pairing}, so that, for all $\vect{f}=(f_1,\dots,f_n)\in \ell^\infty(S)$ and $\vect{a}=(a_1,\dots,a_n)\in\ell^1(S)$,
$$\langle \vect{f},\vect{a}\rangle \coloneq\sum_{i=1}^{n}\langle f_i,a_i\rangle=\sum_{i=1}^{n}\sum_{t\in S}f_i(t)a_i(t).$$

Given $F\subseteq S$, we write $\mathbf{1}_F\in \ell^\infty(S)$ to denote the characteristic function of $F$. More generally, if $f\in\ell^\infty(S)$, the element $f\mathbf{1}_F\in\ell^\infty(S)$ will denote the map that takes the value $f(s)$ at every $s\in F$ and $0$ elsewhere. In the particular case where $F=\{s\}$, we let $\delta_s\coloneq\mathbf{1}_{\{s\}}$.

Together with the \textbf{convolution product} $\ast\colon \ell^1(S)\times \ell^1(S)\to \ell^1(S)$ given by
$$(a\ast b)(s)\coloneq\sum_{\substack{r,t\in S\\rt=s}}a(r)b(t)\quad \text{for all }a,b\in \ell^1(S)\text{ and }s\in S,$$
the space $\ell^1(S)$ becomes a Banach algebra, called the \textbf{convolution Banach algebra} of $S$. We define the \textbf{integral ring} of $S$ to be the subring $\Z[S]\leq \ell^1(S)$ of finitely supported maps with values in $\Z$. From the fact that $\delta_s\ast\delta_t=\delta_{st}$ for all $s,t\in S$ we see that $\{\delta_s:s\in S\}$ is a subsemigroup of $\Z[S]$ isomorphic to $S$. We refer the reader to \cite{dales2010banach} for details on the Banach algebra $\ell^1(S)$.

Given $n\in \N$, $\mathrm{M}_n(\ell^1(S))$ is the Banach algebra of $n\times n$ matrices with coefficients in $\ell^1(S)$, with the usual matrix product and the norm
$$\|\mathbf{A}\|_1\coloneq\sum_{1\leq i,j\leq n}|a_{ij}|\quad\text{for all }\mathbf{A}=(a_{ij})_{i,j}\in\mathrm{M}_n(\ell^1(S)).$$

\subsection{Pontryagin duality}
Given a locally compact abelian group $X$, its \textbf{Pontryagin dual} $\dual{X}$ is the locally compact abelian group consisting of all continuous group homomorphisms $\varphi\colon X\to \mathbb{T}$ with pointwise multiplication and the topology of uniform convergence on compact subsets. The Pontryagin-Van Kampen duality theorem (see e.g., \cite[Theorem 24.2]{hewitt2012abstract}) asserts that the map 
$$\Phi\colon X\to \bidual{X}\,;\, \Phi(x)(\varphi)=\varphi(x)$$
is an isomorphism of topological groups, i.e., an homeomorphic isomorphism of groups. The Pontryagin dual of a compact abelian group is a discrete abelian group, and viceversa. A concrete example of this is $\dual{\Z}\simeq\mathbb{T}$ and $\bidual{\Z}\simeq\dual{\mathbb{T}}\simeq\Z$.
\resumetoc

\section{Algebraic semigroup actions and right $\Z[S]$-modules}\label{sec:algebraic_actions_basics}
We will be interested in the class of \textbf{algebraic} semigroup actions, that is, actions of semigroups by continuous endomorphisms of compact Hausdorff abelian groups. Pontryagin duality allows to establish a correspondence between such actions and right $\Z[S]$-modules, which we proceed to describe now.

Let $S\acts X$ be an action by group endomorphisms of a compact Hausdorff abelian group $X$. Then we can define a right $\Z[S]$-module structure on the abelian group $\dual{X}$ by setting 
$$(\varphi\delta_s)(x)\coloneq\varphi(s\cdot x)\quad\text{for all }\varphi\in\dual{X},s\in S\text{ and }x\in X.$$
This is well-defined as a consequence of the fact that $x\mapsto s\cdot x$ is a continuous group homomorphism for all $s\in S$. Conversely, if $M$ is a right $\Z[S]$-module, we regard its underlying additive group structure as a discrete abelian group, so that $\dual{M}$ is a compact Hausdorff abelian group. An action $S\acts \dual{M}$ by group endomorphisms can be defined by setting
$$(s\cdot \varphi)(m)\coloneq\varphi(m \delta_s)\quad\text{for all }s\in S, \varphi\in \dual{M} \text{ and }m\in M.$$
This action is continuous, as the convergence in the topology of $\dual{M}$ is given by uniform convergence. It is not difficult to see that, under the above described procedure, an algebraic action $S\acts X$ is conjugate to the algebraic action $S\acts \bidual{X}$ induced by the right $\Z[S]$-module structure on $\dual{X}$ obtained from the action $S\acts X$. This yields the desired correspondence between algebraic actions of $S$ and right $\Z[S]$-modules.

We will restrict ourselves to studying algebraic actions arising from finitely generated right $\Z[S]$-modules.
To give a concrete description of the Pontryagin dual of a finitely generated right $\Z[S]$-module, we let $\pi\colon \R\to \mathbb{T}$ be the canonical projection map, and extend it to $\pi\colon \ell^\infty(S,\R)^n\to (\mathbb{T}^S)^n$ for all $n\in \N$ in the natural way. We start with the following lemma.

\begin{lemma}
	The product $\langle\cdot\,,\cdot \rangle\colon(\mathbb{T}^S)^n\times \Z[S]^n\to \mathbb{T}$ given by the formula $\langle \vect{x},\vect{a}\rangle\coloneq\pi(\langle \hat{\vect{x}},\vect{a}\rangle)$, where $\vect{x}\in (\mathbb{T}^S)^n$, $\vect{a}\in \Z[S]^n$ and $\hat{\vect{x}}\in \ell^\infty(S,\R)^n$ is such that $\pi(\hat{\vect{x}})=\vect{x}$, is well-defined.
\end{lemma}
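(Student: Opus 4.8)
The plan is to verify that $\langle \vect{x},\vect{a}\rangle$ does not depend on the chosen lift $\hat{\vect{x}}\in \ell^\infty(S,\R)^n$ of $\vect{x}$. First I would record that a lift always exists and that the relevant pairing is a finite sum: since $\pi\colon\R\to\mathbb{T}$ is surjective, each coordinate function of $\vect{x}$ can be lifted pointwise into $[0,1)$, producing a bounded representative $\hat{\vect{x}}\in\ell^\infty(S,\R)^n$; and because $\vect{a}\in\Z[S]^n$ is finitely supported, the expression $\langle\hat{\vect{x}},\vect{a}\rangle=\sum_{i=1}^n\sum_{t\in S}\hat{x}_i(t)a_i(t)$ is a finite sum, hence a genuine real number to which $\pi$ may be applied.

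Next I would take two lifts $\hat{\vect{x}},\hat{\vect{x}}'$ and compare. The key observation is that $\pi(\hat{\vect{x}})=\pi(\hat{\vect{x}}')=\vect{x}$ forces the difference to be integer-valued coordinatewise: $(\hat{x}_i-\hat{x}_i')(t)\in\ker\pi=\Z$ for every $i$ and every $t\in S$. Using bilinearity of the canonical dual pairing one then has
$$\langle\hat{\vect{x}},\vect{a}\rangle-\langle\hat{\vect{x}}',\vect{a}\rangle=\sum_{i=1}^n\sum_{t\in S}\bigl(\hat{x}_i-\hat{x}_i'\bigr)(t)\,a_i(t),$$
and since both each factor $(\hat{x}_i-\hat{x}_i')(t)$ and each $a_i(t)$ lie in $\Z$, while the sum is finite, this difference is an integer. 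Applying $\pi$ and using that $\pi$ vanishes on $\Z$ then yields $\pi(\langle\hat{\vect{x}},\vect{a}\rangle)=\pi(\langle\hat{\vect{x}}',\vect{a}\rangle)$, which is exactly the claimed independence of the lift.

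There is no substantial obstacle in this argument; it is a routine well-definedness check. The one point deserving attention is that \emph{both} integrality hypotheses are needed simultaneously: the difference of two lifts is integer-valued because $\ker\pi=\Z$, and the entries of $\vect{a}$ are integer-valued because $\vect{a}\in\Z[S]^n$ rather than merely $\ell^1(S)^n$. It is precisely the product of these two integer quantities, summed over the finite support of $\vect{a}$, that lands in $\Z$ and is therefore annihilated by $\pi$.
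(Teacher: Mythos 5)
Your argument is correct and is essentially identical to the paper's own proof: both compare two lifts, observe that their difference is $\Z$-valued because $\ker\pi=\Z$, and conclude that the pairing with a finitely supported integer-valued $\vect{a}$ differs by an integer, hence is killed by $\pi$. The extra remarks on existence of a lift and finiteness of the sum are fine but not needed beyond what the paper already takes as given.
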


\begin{proof}
	If $\hat{\vect{x}}=(\hat{x}_1,\dots,\hat{x}_n)$ and $\hat{\vect{x}}'=(\hat{x}_1',\dots,\hat{x}_n')$ are elements of $\ell^\infty(S,\R)^n$ such that $\pi(\hat{\vect{x}})=\pi(\hat{\vect{x}}')\in (\mathbb{T}^S)^n$, then $\hat{\vect{x}}-\hat{\vect{x}}'\in (\Z^S)^n$. Hence,
	$$\langle \hat{\vect{x}},\vect{a}\rangle-\langle \hat{\vect{x}}',\vect{a}\rangle=\sum_{i=1}^n\sum_{t\in S}(\hat{x}_i-\hat{x}'_i)(t)a_i(t)\in \Z$$
	for all $\vect{a}=(a_1,\dots,a_n)\in \Z[S]^n$, so that $\pi(\langle \hat{\vect{x}},\vect{a}\rangle)=\pi(\langle \hat{\vect{x}}',\vect{a}\rangle)$.
\end{proof}

We first analyze the algebraic action induced by a free right $\Z[S]$-module of finite rank. For each $n\in \N$, $\Z[S]^n$ gives rise to an algebraic action $S\acts (\Z[S]^n)^\wedge$ via the formula
$$(s\cdot \varphi)(a_1,\dots,a_n)=\varphi(a_1\ast \delta_s,\dots,a_n\ast \delta_s),$$
where $\varphi\in (\Z[S]^n)^\wedge$, $a_1,\dots,a_n\in\Z[S]$ and $s\in S$. The topological group isomorphism
$$(\Z[S]^n)^\wedge=\mathrm{Hom}\left(\bigoplus_{i=1}^{n}\bigoplus_{s\in S}\Z\,,\mathbb{T}\right)\simeq  \prod_{s\in S}\prod_{i=1}^{n}\mathrm{Hom}(\Z,\mathbb{T})\simeq(\mathbb{T}^S)^n$$ 
provided by the Pontryagin duality, gives a conjugacy between the action $S\acts (\Z[S]^n)^\wedge$ and the shift action $S\acts (\mathbb{T}^S)^n$ defined as 
$$(s\cdot \mathbf{x})(t)=(x_1(ts),\dots,x_n(ts))\quad \text{ for all }s,t\in S\text{ and }\mathbf{x}=(x_1,\dots,x_n)\in (\mathbb{T}^S)^n.$$ 
With slight abuse of notation, which comes from identifying $(\mathbb{T}^S)^n\simeq (\mathbb{T}^n)^S$, we will sometimes denote the element $(x_1(ts),\dots,x_n(ts))$ by $\mathbf{x}(ts)$, so that $(s\cdot \mathbf{x})(t)=\mathbf{x}(ts)$. We now give a detailed account of the conjugacy between $S\acts \dual{(\Z[S]^n)}$ and $S\acts (\mathbb{T}^S)^n$.

\begin{proposition}\label{prop:free_correspondence}
	The topological group isomorphism $\Phi\colon \dual{[\Z[S]^n]}\to (\mathbb{T}^S)^n$ defined as
	$$\Phi(\varphi)\coloneq (t\mapsto\varphi(\delta_{t}\mathbf{e}_1),\dots,t\mapsto\varphi(\delta_{t}\mathbf{e}_n))\quad \text{for all }\varphi\in \dual{(\Z[S]^n)}$$
	is a conjugacy with inverse given by $\Phi^{-1}(\vect{x})(\vect{a})=\langle \vect{x}, \vect{a}\rangle$ for all $\vect{x}\in (\mathbb{T}^S)^n$ and $\vect{a}\in \Z[S]^n$.
\end{proposition}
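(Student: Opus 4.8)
The plan is to verify in turn that $\Phi$ is a well-defined group homomorphism, that the stated formula gives its inverse, that it is a homeomorphism, and finally that it intertwines the two actions. Since the underlying abelian group of $\Z[S]^n$ is free on the basis $\{\delta_t\mathbf{e}_i : t\in S,\, 1\le i\le n\}$, any $\varphi\in\dual{(\Z[S]^n)}$ is completely determined by its values on this basis. First I would note that each coordinate $t\mapsto\varphi(\delta_t\mathbf{e}_i)$ is a function $S\to\mathbb{T}$, so $\Phi(\varphi)$ genuinely lands in $(\mathbb{T}^S)^n$, and that $\Phi$ is additive because evaluation at a fixed basis element is additive in $\varphi$.

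For the inverse, I would check that for $\vect{x}\in(\mathbb{T}^S)^n$ the assignment $\vect{a}\mapsto\langle\vect{x},\vect{a}\rangle$ is a well-defined homomorphism $\Z[S]^n\to\mathbb{T}$: well-definedness is exactly the content of the preceding lemma, and the homomorphism property follows from $\Z$-bilinearity of the pairing. Continuity is automatic since $\Z[S]^n$ is discrete, so this assignment is an element of $\dual{(\Z[S]^n)}$. I would then verify the two compositions on generators: $\Phi(\Phi^{-1}(\vect{x}))$ has $i$-th coordinate $t\mapsto\langle\vect{x},\delta_t\mathbf{e}_i\rangle=\pi(\hat{x}_i(t))=x_i(t)$, recovering $\vect{x}$; and since both $\Phi^{-1}(\Phi(\varphi))$ and $\varphi$ are homomorphisms agreeing on every $\delta_t\mathbf{e}_i$, they coincide on all of $\Z[S]^n$.

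For the topological statement, both $\dual{(\Z[S]^n)}$ and $(\mathbb{T}^S)^n$ are compact Hausdorff groups, so it suffices to show $\Phi$ is a continuous bijection. Since $\Z[S]^n$ is discrete its compact subsets are finite, so the dual carries the topology of pointwise convergence, under which each evaluation $\varphi\mapsto\varphi(\delta_t\mathbf{e}_i)$ is continuous; hence $\Phi$ is continuous into the product topology of $(\mathbb{T}^S)^n$, and a continuous bijective homomorphism of compact Hausdorff groups is automatically a topological isomorphism. Alternatively, this is just the standard identification $\dual{\left(\bigoplus_\alpha\Z\right)}\simeq\prod_\alpha\mathbb{T}$ discussed before the statement.

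The remaining step, and the only one requiring real care, is equivariance. Here I would compute the $i$-th coordinate of $\Phi(s\cdot\varphi)$ at $t$, which is $(s\cdot\varphi)(\delta_t\mathbf{e}_i)=\varphi((\delta_t\mathbf{e}_i)\delta_s)$. The key identity is $(\delta_t\mathbf{e}_i)\delta_s=(\delta_t\ast\delta_s)\mathbf{e}_i=\delta_{ts}\mathbf{e}_i$, coming from $\delta_t\ast\delta_s=\delta_{ts}$, so this value equals $\varphi(\delta_{ts}\mathbf{e}_i)$, i.e.\ the $i$-th coordinate of $\Phi(\varphi)$ evaluated at $ts$. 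By definition of the shift action this is precisely the $i$-th coordinate of $s\cdot\Phi(\varphi)$ at $t$, giving $\Phi(s\cdot\varphi)=s\cdot\Phi(\varphi)$. I expect this computation, hinging on the order $ts$ produced by right multiplication by $\delta_s$, to be the crux: it is what forces the shift to act by $t\mapsto\vect{x}(ts)$ rather than $t\mapsto\vect{x}(st)$, matching the right-module convention.
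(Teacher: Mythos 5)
Your proposal is correct and follows essentially the same route as the paper's proof: verify bijectivity by computing both compositions, deduce that a continuous bijection between compact Hausdorff spaces is a homeomorphism, and establish equivariance via the identity $(\delta_t\mathbf{e}_i)\delta_s=\delta_{ts}\mathbf{e}_i$. The only cosmetic difference is that you check $\Phi^{-1}\circ\Phi=\mathrm{id}$ on the free basis $\{\delta_t\mathbf{e}_i\}$ and invoke freeness, whereas the paper expands $\varphi(a_1,\dots,a_n)$ directly as a sum over supports; these are the same computation.
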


\begin{proof}
	We first check that $\Phi$ is bijective. Let $\varphi\in (\Z[S]^n)^{\wedge}$, $a_1,\dots,a_n\in \Z[S]$, and write $\Phi(\varphi)=(x_1,\dots,x_n)$ so that $x_i(t)=\varphi(\delta_t\mathbf{e}_1)$ for every $t\in S$ and $1\leq i\leq n$. Then,
	\begin{align*}
		\Phi^{-1}(\Phi(\varphi))(a_1,\dots,a_n)&=\sum_{i=1}^{n}\langle x_i, a_i\rangle=\sum_{i=1}^{n}\sum_{t\in S}x_i(t)a_i(t)\\
		&= \sum_{i=1}^{n}\sum_{t\in S}\varphi(\delta_t\mathbf{e}_i)a_i(t)=\sum_{i=1}^{n}\varphi\left(a_i\mathbf{e}_i\right)\\
		&=\varphi(a_1,\dots,a_n),
	\end{align*}
	so $\Phi^{-1}(\Phi(\varphi))=\varphi$. On the other hand, if $(x_1,\dots,x_n)\in (\mathbb{T}^S)^n$ and $s\in S$, then writing $\Phi(\Phi^{-1}(x_1,\dots,x_n))=(y_1,\dots,y_n)$ we have
	$$y_j(t)=\Phi^{-1}(x_1,\dots,x_n)(\delta_t\mathbf{e}_j)=\sum_{i=1}^n\langle x_i, \delta_t\mathbf{e}_j\rangle=\langle x_j,\delta_t\rangle=x_j(t)$$
	for all $t\in S$ and $1\leq j\leq n$, so $\Phi(\Phi^{-1}(x_1,\dots,x_n))=(x_1,\dots,x_n)$. Therefore $\Phi$ is a bijection with inverse $\Phi^{-1}$.
	
	Since the topology on $\dual{(\Z[S]^n)}$ is given by convergence on compact (hence finite) subsets of $\Z[S]^n$, it is readily checked that $\Phi$ is continuous. It is thus an homeomorphism, as the involved spaces are compact Hausdorff.
	
	Finally, to see that $\Phi$ is $S$-equivariant note that for every $\varphi\in\dual{(\Z[S]^n)}$ and $s,t\in S$, writing $\Phi(s\cdot \varphi)=(x_1,\dots,x_n)$ and $\Phi(\varphi)=(y_1,\dots,y_n)$ we find that
	$$x_j(t)=(s\cdot\varphi)(\delta_t\mathbf{e}_j)=\varphi((\delta_t\ast\delta_s)\mathbf{e}_j)= \varphi(\delta_{ts}\mathbf{e}_j)=y_j(ts)=(s\cdot y_j)(t).$$
	Thus $\Phi(s\cdot \varphi)=s\cdot \Phi(\varphi)$.
\end{proof}

\begin{corollary}\label{prop:correspondence_algebraic<->subshift}
	Let $J\leq \Z[S]^n$ be a right $\Z[S]$-submodule, and let
	$$X_J\coloneq\left\{\vect{x}\in(\mathbb{T}^S)^{n}:\langle \vect{x}, \vect{a}\rangle=0_{\mathbb{T}}\text{  for all }\vect{a}\in J\right\}.$$ 
	The action
	$S\acts \dual{(\Z[S]^n/J)}$ is conjugate to the restriction of the action $S\acts (\mathbb{T}^S)^n$ to $X_J$.
\end{corollary}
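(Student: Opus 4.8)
The plan is to exploit the conjugacy $\Phi\colon \dual{(\Z[S]^n)}\to (\mathbb{T}^S)^n$ established in Proposition~\ref{prop:free_correspondence}, reducing the statement about the quotient module to a statement about the free module that we already understand. The key observation is that the submodule $J\leq \Z[S]^n$ induces a short exact sequence of right $\Z[S]$-modules
\begin{equation*}
0\longrightarrow J\longrightarrow \Z[S]^n\longrightarrow \Z[S]^n/J\longrightarrow 0,
\end{equation*}
and applying the (contravariant, exact) Pontryagin duality functor $\dual{(-)}=\mathrm{Hom}(-,\mathbb{T})$ turns this into a short exact sequence in the reverse direction. Concretely, the surjection $q\colon \Z[S]^n\to \Z[S]^n/J$ dualizes to an injective $S$-equivariant topological group homomorphism $\dual{q}\colon \dual{(\Z[S]^n/J)}\to \dual{(\Z[S]^n)}$, which identifies $\dual{(\Z[S]^n/J)}$ with the annihilator subgroup $\{\varphi\in\dual{(\Z[S]^n)}:\varphi|_J=0\}$.

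First I would verify that $\dual{q}$ is a homeomorphism onto its image: since $\mathbb{T}$ is divisible (hence an injective $\Z$-module), every character of $\Z[S]^n/J$ pulls back to a character of $\Z[S]^n$ vanishing on $J$, and conversely every such character factors uniquely through the quotient; this gives the bijection onto the annihilator, and the identification is topological because both are compact Hausdorff and $\dual{q}$ is continuous. Second I would check $S$-equivariance of $\dual{q}$, which follows directly from the definition of the module structure on duals given in \S\ref{sec:algebraic_actions_basics}: the $S$-action on $\dual{(\Z[S]^n/J)}$ is defined via the $\Z[S]$-module structure on $\Z[S]^n/J$, which is the one induced by $q$, so $\dual{q}$ intertwines the two actions. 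Third, and this is the crucial translation step, I would transport the annihilator $\{\varphi:\varphi|_J=0\}$ through the conjugacy $\Phi$ of Proposition~\ref{prop:free_correspondence}. Since $\Phi^{-1}(\vect{x})(\vect{a})=\langle\vect{x},\vect{a}\rangle$, a character $\varphi=\Phi^{-1}(\vect{x})$ vanishes on $J$ precisely when $\langle\vect{x},\vect{a}\rangle=0_{\mathbb{T}}$ for all $\vect{a}\in J$, which is exactly the defining condition of $X_J$. Hence $\Phi$ restricts to an $S$-equivariant homeomorphism from the annihilator onto $X_J$, and composing with $\dual{q}$ yields the desired conjugacy $\dual{(\Z[S]^n/J)}\simeq X_J$.

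The main obstacle I anticipate is being careful about the topological identification of $\dual{(\Z[S]^n/J)}$ with the annihilator subgroup. While the algebraic bijection is a routine consequence of injectivity of $\mathbb{T}$ as a $\Z$-module, one must confirm that $\dual{q}$ is a closed embedding and that the subspace topology on the annihilator (inherited from $\dual{(\Z[S]^n)}$) matches the intrinsic dual topology on $\dual{(\Z[S]^n/J)}$. This is where compactness does the work: $\dual{q}$ is a continuous injection between compact Hausdorff spaces, hence automatically a homeomorphism onto its closed image, so no delicate estimate is required. Everything else---the verification that $X_J$ equals $\Phi(\mathrm{Ann}(J))$ and the equivariance---reduces to unwinding the definitions already in place, so the proof should be short once this duality bookkeeping is arranged.
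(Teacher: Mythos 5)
Your proposal is correct and follows essentially the same route as the paper: identify $\dual{(\Z[S]^n/J)}$ with the annihilator subgroup $\{\varphi\in\dual{(\Z[S]^n)}:\varphi|_J\equiv 0\}$ (your $\dual{q}$ is just the inverse of the paper's map $\Psi$ coming from the universal property of the quotient), check $S$-equivariance, and transport through the conjugacy $\Phi$ of Proposition~\ref{prop:free_correspondence} to land exactly on $X_J$. The only cosmetic difference is your appeal to divisibility of $\mathbb{T}$, which is not actually needed for the direction you use (factoring a character vanishing on $J$ through the quotient is the bare universal property); otherwise the argument matches the paper's.
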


\begin{proof}
	Let $\Psi\colon \{\varphi\in \mathrm{Hom}(\Z[S]^n,\mathbb{T}):\varphi\rvert_J\equiv 0\}\to \mathrm{Hom}(\Z[S]^n/J,\mathbb{T})$ be the topological group isomorphism granted by the universal property of the quotient group. By considering the action of $S$ induced by Pontryagin duality upon the domain of $\Psi$, and the restriction of the action $S\acts \dual{(\Z[S]^n)}$ to the codomain of $\Psi$, it is clear that $\Psi$ defines a conjugacy, as
	\begin{align*}
		\Psi(s\cdot \varphi)(\mathbf{a}+J)&=(s\cdot \varphi)(\mathbf{a})=\varphi(\mathbf{a}\ast\delta_s)=\Psi(\varphi)(\mathbf{a}\ast\delta_s+J)\\
		&=\Psi(\varphi)((\mathbf{a}+J)\ast\delta_s)=(s\cdot \Psi(\varphi))(\mathbf{a}+J)
	\end{align*} 
	for all $\mathbf{a}\in\Z[S]^n$. Therefore, noting that for the conjugacy $\Phi$ from Proposition \ref{prop:free_correspondence} we have
	$$\Phi(\{\varphi\in \mathrm{Hom}(\Z[S]^n,\mathbb{T}):\varphi\rvert_J\equiv 0\})=\{\vect{x}\in(\mathbb{T}^S)^n:\langle \vect{x},\vect{a}\rangle=0\text{ for all }\vect{a}\in J\},$$
	it follows that $\Phi\circ \Psi^{-1}$ defines a conjugacy $\dual{(\Z[S]^n/J)}\to X_J$.	
\end{proof}

\begin{definition}
	Let $J\leq \Z[S]^n$ be a right $\Z[S]$-submodule, and let
	$$X_J\coloneq\left\{\vect{x}\in(\mathbb{T}^S)^{n}:\langle \vect{x}, \vect{a}\rangle=0_{\mathbb{T}}\text{  for all }\vect{a}\in J\right\}.$$
	The action $S\acts X_J$ given as the restriction of the action $S\acts (\mathbb{T}^S)^n$ to $X_J$ will be called the \textbf{algebraic action induced by} $J$.
\end{definition}

\section{Expansive algebraic semigroup actions}\label{sec:expansivity}
The purpose of this section is to study the property of expansivity for actions arising from finitely generated right $\Z[S]$-modules. We recall the definition of expansivity, which was given in the introduction.

\expansivitydef*

We will work with a fixed metric compatible with the product topology on $(\mathbb{T}^S)^n$. Let $\rho$ be the standard metric on $\R/\Z$, that is, the one defined as follows for $\alpha_1,\alpha_2\in \R/\Z$:
$$\rho(\alpha_1,\alpha_2)\coloneq\min_{k\in \Z}|\hat{\alpha}_1-\hat{\alpha}_2+k|,$$
where $\hat{\alpha}_i\in\R$ is such that $\pi(\hat{\alpha}_i)=\alpha_i$ for $i=1,2$. Then, taking an enumeration $\{s_1,s_2,s_3,\dots\}$ of $S$, the formula
$$d(\mathbf{x},\mathbf{y})\coloneq\max_{1\leq j\leq n}\sum_{i\geq 1}\frac{\rho(x_j(s_i),y_j(s_i))}{2^{i}[1+\rho(x_j(s_i),y_j(s_i))]},$$
for $\mathbf{x}=(x_1,\dots,x_n)$ and $\mathbf{y}=(y_1,\dots,y_n)$, defines a metric on $(\mathbb{T}^S)^n$ bounded by $1$.

\subsection{A general characterization of expansivity}\label{subsec:meta_characterization}
Given a right $\Z[S]$-submodule $J\leq \Z[S]^n$, the \textbf{annihilator} of $J$ is defined by
$$J^\perp\coloneq\left\{\mathbf{f}\in\ell^\infty(S)^n:\langle \mathbf{f}, \mathbf{a}\rangle=0\text{ for all }\mathbf{a}\in J\right\}.$$

\begin{proposition}\label{prop:expansivity_matrix_general_version}
	Let $S$ be a semigroup such that $KS=S$ for some finite $K\subseteq S$. Then for every right $\Z[S]$-submodule $J\leq \Z[S]^n$, the following are equivalent. 
	\begin{enumerate}
		\item[\textup{(i)}] The action $S\acts X_J$ is expansive.
		\item[\textup{(ii)}] We have $J^\perp=\{0\}$.
		\item[\textup{(iii)}] There exists a finitely generated right $\Z[S]$-submodule $\omega\leq J$ with $\omega^\perp=\{0\}$.
	\end{enumerate}
\end{proposition}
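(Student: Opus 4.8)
The plan is to prove the cycle of implications $(iii)\Rightarrow(ii)\Rightarrow(i)\Rightarrow(iii)$, keeping the finiteness hypothesis $S=KS$ in reserve for exactly the step that needs it. First I would dispatch $(iii)\Rightarrow(ii)$: if $\omega\leq J$ has $\omega^\perp=\{0\}$, then since $J^\perp\subseteq\omega^\perp$ (more testing functionals vanish on the larger module), we immediately get $J^\perp\subseteq\omega^\perp=\{0\}$. This direction is purely formal and requires no hypothesis on $S$.

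For $(ii)\Rightarrow(i)$, the idea is to show that if the action $S\acts X_J$ is \emph{not} expansive, then $J^\perp\neq\{0\}$, by producing a nonzero element of $\ell^\infty(S)^n$ annihilating $J$. Non-expansivity means that for every $\epsilon>0$ there exist distinct $\vect{x},\vect{y}\in X_J$ with $\sup_{s\in S}d(s\cdot\vect{x},s\cdot\vect{y})<\epsilon$; equivalently, setting $\vect{z}=\vect{x}-\vect{y}\in X_J$ (using that $X_J$ is a subgroup, so the difference still annihilates $J$), there is a nonzero $\vect{z}\in X_J$ whose entire forward orbit $\{s\cdot\vect{z}:s\in S\}$ stays within $\epsilon$ of $\mathbf{0}$ in the metric $d$. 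Lifting each coordinate $z_j$ through $\pi$ to a real-valued function $\hat{z}_j$ with values in $(-1/2,1/2]$, the smallness of $d(s\cdot\vect z,\mathbf 0)$ for all $s$ forces the lift $\hat{\vect z}$ (or a suitable rescaling) to define a bounded real function on $S$. The key computation is that for $\vect a\in J$, the pairing $\langle\hat{\vect z},\vect a\rangle$ is an integer (since $\vect z\in X_J$ makes $\langle\vect z,\vect a\rangle=0_{\mathbb T}$), and I would argue that taking $\epsilon\to 0$ along a sequence of such orbit-bounded points yields, in the limit, a genuinely nonzero $\ell^\infty$-annihilator of $J$. This is where I expect the main obstacle: a single non-expansive pair gives only an approximate annihilator, so I would need a compactness/diagonal argument over a sequence $\epsilon_k\to 0$, extracting a weak-$\ast$ limit point in $\ell^\infty(S)^n=(\ell^1(S)^n)^\ast$ and verifying both that it is nonzero and that it lands in $J^\perp$.

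The reverse direction $(i)\Rightarrow(iii)$ is where I expect the finiteness hypothesis $S=KS$ to be indispensable, and it is the heart of the argument. Assuming expansivity with constant $\epsilon$, I would use $\epsilon$-expansivity to build, for each coordinate, a finite ``window'' of constraints that already separates orbits, and then assemble these into a genuinely finitely generated submodule $\omega\leq J$ with $\omega^\perp=\{0\}$. Concretely, I would aim to show that expansivity provides a finite subset $F\subseteq S$ such that any $\vect z\in X_J$ which is $\epsilon$-small on the translates $F s$ for all $s$ must be zero; the hypothesis $S=KS$ with $K$ finite is what lets me cover $S$ by finitely many translates and thereby convert the uniform orbit-estimate into a finite set of generators of $\omega$. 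The delicate part is checking that the resulting finitely generated $\omega$ genuinely has trivial annihilator rather than merely controlling the torus points $X_J$; here I would invoke the duality between $X_J$ being the annihilator of $J$ in $(\mathbb T^S)^n$ and $J^\perp$ being the annihilator in $\ell^\infty(S)^n$, passing between the ``points vanish'' and ``functionals vanish'' formulations via the pairing $\langle\cdot,\cdot\rangle$.

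Overall I would structure the write-up so that the two hypothesis-free implications $(iii)\Rightarrow(ii)$ and the easy half of the duality are stated crisply, and devote the bulk of the effort to the quantitative passage between the expansivity constant $\epsilon$, the metric $d$ built from the enumeration $\{s_i\}$, and the covering $S=KS$. The single hardest step is $(i)\Rightarrow(iii)$, specifically extracting a \emph{finitely generated} $\omega$ rather than merely concluding $J^\perp=\{0\}$; I anticipate that without $S=KS$ one can only control finitely many coordinates at a time, so the finiteness of $K$ is precisely the mechanism that promotes ``trivial annihilator'' to ``trivial annihilator witnessed by finitely many elements.''
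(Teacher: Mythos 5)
Your overall architecture (a three-step cycle with $(iii)\Rightarrow(ii)$ trivial) is reasonable, and $(iii)\Rightarrow(ii)$ is correct as stated. The genuine problem is your allocation of the hypothesis $S=KS$: you plan to prove $(ii)\Rightarrow(i)$ without it and reserve it for $(i)\Rightarrow(iii)$, but $(ii)\Rightarrow(i)$ is precisely an implication that \emph{fails} without that hypothesis --- the proof of Theorem~\ref{thm:meta_characterization} exhibits, for any $S$ with $KS\subsetneq S$ for all finite $K$, the module $J=2\Z[S]$ with $J^\perp=\{0\}$ and $S\acts X_J$ non-expansive. Concretely, your sketch of $(ii)\Rightarrow(i)$ breaks in two places. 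First, the orbit estimate $\sup_s d(s\cdot\vect{z},\vect{0})<\epsilon$ only controls $\rho(z_j(ts),0)$ for $t$ in finite initial segments of the enumeration, with constants $2^i$ that blow up; it gives no uniform bound on $z_j(s)$ over all of $S$ unless $S=KS$ lets you write every $s$ as $ts'$ with $t$ in a fixed finite set. Second, even granting such a bound, your lifts $\hat{\vect{z}}_\epsilon$ satisfy $\|\hat{\vect{z}}_\epsilon\|_\infty\to 0$, so the naive weak-$*$ limit is zero; after renormalizing you must prevent the mass from escaping to infinity, which requires translating each function so that its restriction to the fixed finite set $K$ has norm at least $1/2$ --- again using $S=KS$ together with the shift-invariance of annihilators, $\langle s\cdot\vect{f},\vect{a}\rangle=\langle\vect{f},\vect{a}\ast\delta_s\rangle$. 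Neither mechanism appears in your write-up, and you flag the nonvanishing of the limit as something ``to verify'' without an idea for doing so.

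Your $(i)\Rightarrow(iii)$ is also missing its engine. The paper factors it as $(i)\Rightarrow(ii)\Rightarrow(iii)$: for $(i)\Rightarrow(ii)$ the key idea (absent from your sketch) is that a real-valued $\vect{f}\in J^\perp$ yields points $\pi(\lambda\vect{f})\in X_J$ for \emph{every} $\lambda\in\R$, whose full orbits are uniformly small once $\lambda$ is small because the shift does not increase $\|\cdot\|_\infty$; expansivity then forces $\pi(\lambda\vect{f})=0$ for all small $\lambda$, hence $\vect{f}=0$. For $(ii)\Rightarrow(iii)$ one runs a compactness argument over the net of \emph{all} finitely generated submodules $\omega\leq J$: if every $\omega^\perp$ were nontrivial, normalized and $K$-translated witnesses $\vect{f}^\omega$ would accumulate at a nonzero element of $J^\perp$. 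Your ``finite window of constraints'' does not by itself produce a finitely generated $\omega$ with $\omega^\perp=\{0\}$ --- a finite subset of $S$ is not a finite set of generators of a submodule of $J$ --- and you supply no bridge between the two. I would reorganize along the cycle $(i)\Rightarrow(ii)\Rightarrow(iii)\Rightarrow(i)$, where the hypothesis enters in $(ii)\Rightarrow(iii)$ and $(iii)\Rightarrow(i)$ and each of the above ideas is used exactly once.
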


\begin{proof}
	Let us prove that (i) implies (ii). It suffices to show that $J^\perp\cap \ell^\infty(S,\R)^n=\{0\}$. Let $\vect{f}\in \ell^\infty(S,\R)^n$ be such that $\langle \vect{f}, \vect{a}\rangle=0$ for all $\vect{a}\in J$. Then, $\langle \lambda\vect{f}, \vect{a}\rangle=0$ for every $\lambda\in\R$ and $\vect{a}\in J$, so Proposition \ref{prop:correspondence_algebraic<->subshift} allows us to conclude that $\pi(\lambda \vect{f})\in X_{J}$ for every $\lambda\in \R$. 
	
	By expansivity, there exists an open neighborhood $U$ of $0\in X_J$ such that $\bigcap_{t\in S}t^{-1}U=\{0\}$ (take, for instance, $U=B(0,\epsilon)$ with $\epsilon>0$ the expansivity constant). Since the action $S\acts \ell^\infty(S,\R)^n$ is by isometries, and the map $\pi\colon \ell^\infty(S,\R^n)\to (\mathbb{T}^n)^S$ is continuous and $S$-equivariant, there exists $\lambda_0\in\R$ such that $S\pi(\lambda \vect{f})\subseteq U$ for all $0<\lambda<\lambda_0$. Hence,
	$$\pi(\lambda \vect{f})\in\bigcap_{t\in S}t^{-1}(S\pi(\lambda \vect{f}))\subseteq \bigcap_{t\in S}t^{-1}U=\{0\},$$
	which means $\pi(\lambda \vect{f})=0$ for all $0<\lambda< \lambda_0$. In other words, $\lambda \vect{f}\in \ell^{\infty}(S,\Z)^n$ for all $0<\lambda< \lambda_0$, so we must have $\vect{f}=0$. Thus, we have (ii) as desired.
	
	Now we prove that (ii) implies (iii). Let $K\subseteq S$ be a finite subset satisfying $S=KS$, and consider $\Omega=\{\omega\leq J:\omega\text{ is a finitely generated right }\Z[S]\text{-submodule}\}$ ordered by inclusion. Assume that $\omega^\perp\neq \{0\}$ for all $\omega\in \Omega$. Then, we can find for each $\omega\in\Omega$ a non-zero element $\vect{f}^\omega\in\omega^\perp$, and by renormalizing we may assume that $\|\vect{f}^\omega\|_\infty=1$. Moreover, since $S=KS$ and $s\cdot \vect{f}^\omega\in \omega_\infty^{\perp}$ for all $s\in S$, by replacing $\vect{f}^\omega$ with $s\cdot \vect{f}^\omega$ for an adequate $s\in S$, we may assume that $\|\vect{f}^\omega\rvert_K\|_\infty\geq 1/2$. By compactness of $([-1,1]^n)^S$ we may take a limit point $\vect{f}\in \ell^\infty(S)^n$ of the net $(\vect{f}^\omega)_{\omega\in \Omega}$, where $\Omega$ is ordered by inclusion. Note that it must be the case that $\|\vect{f}\rvert_K\|_\infty\geq 1/2$, so in particular $\vect{f}\neq 0$. However, since $\langle\vect{f}^\omega, \vect{a}\rangle=0$ for all $\vect{a}\in \omega$ and $\omega\in\Omega$, if $\vect{a}\in J$ and $\omega\in \Omega$ is such that $\vect{a}\in \omega$, then for any $\epsilon>0$ we have
	\begin{align*}
		|\langle\vect{f}, \vect{a}\rangle|=|\langle\vect{f}-\vect{f}^\omega, \vect{a}\rangle|\leq \sum_{i=1}^n\sum_{t\in \operatorname{supp}(a_i)}|(f_i-f_i^\omega)(t)a_i(t)|\leq \epsilon\|\vect{a}\|_1
	\end{align*}
	for $\omega$ large enough. Thus, $\langle\vect{f}, \vect{a}\rangle=0$ for all $\vect{a}\in J$, which implies $\vect{f}\in J^\perp=\{0\}$, a contradiction. 
	
	Let us see that (iii) implies (i). Since the metric $\rho$ is invariant under translations by elements of $\R/\Z$, we have that $d(\mathbf{x},\mathbf{y})=d(\mathbf{x}-\mathbf{y})$ for all $\mathbf{x},\mathbf{y}\in X_\omega$, and so it suffices to show that there exists $\epsilon>0$ such that, for an arbitrary non-zero element $\vect{x}=(x_1,\dots,x_n)$ of $X_\omega$, $\sup_{s\in S}d(s\cdot\mathbf{x},0)\geq \epsilon$. Let $\mathbf{x}$ be such an element. There exists a non-zero $\vect{f}=(f_1,\dots,f_n)\in ([-1/2,1/2]^S)^n$ with $\pi(\vect{f})=\vect{x}$, and such that
	$$\|\vect{f}\|_\infty=\sup_{s\in S}\max_{1\leq j\leq n}\rho(x_j(s),0).$$ 
	Now, since $\omega^\perp=\{0\}$ and $\vect{f}$ is non-zero, we must have that $\langle\vect{f}, \vect{a}\rangle\neq 0$ for some $\vect{a}\in \omega$. Writing $\omega=\mathbf{A}\Z[S]^k$ for some $k\in\N$ and $\mathbf{A}=(a_{ij})_{i,j}\in\mathrm{M}_{n\times k}(\Z[S])$, we have $\vect{a}=\mathbf{A}\ast \vect{b}$ for some $\vect{b}=(b_1,\dots,b_k)\in\Z[S]^k$. Therefore,
	$$0\neq \langle\vect{f}, \vect{a}\rangle=\langle\vect{f}, \mathbf{A}\ast \vect{b}\rangle=\bigg{\langle}\vect{f},\mathbf{A}\ast \sum_{j=1}^{k}\sum_{t\in S}b_j(t)\delta_t\mathbf{e}_j\bigg{\rangle}=\sum_{j=1}^{k}\sum_{t\in S}b_j(t)\langle\vect{f},\mathbf{A}\ast \delta_t\mathbf{e}_j\rangle,$$
	so there must exist $t\in S$ and $1\leq j\leq k$ such that $\langle\vect{f}, \mathbf{A}\ast \delta_{t}\mathbf{e}_j\rangle\neq 0$. Since $\vect{x}\in X_\omega$, we have that $\pi(\langle\vect{f}, \vect{a}\rangle)=\langle \vect{x}, \vect{a}\rangle=0$ for all $\vect{a}\in \omega$, by Proposition \ref{prop:correspondence_algebraic<->subshift}. Hence, $\langle\vect{f},\vect{a}\rangle\in\Z$ for every $\vect{a}\in \omega$, and in particular $\langle\vect{f},\mathbf{A}\ast \delta_{t}\mathbf{e}_j\rangle\in \Z$, yielding $|\langle\vect{f},\mathbf{A}\ast \delta_{t}\mathbf{e}_j\rangle|\geq 1$.
	Therefore, using the fact that $\|a_{ij}\ast \delta_t\|_1\leq \|a_{ij}\|_1\|\delta_t\|_1=\|a_{ij}\|_1$, we get
	\begin{align*}
		1&\leq |\langle\vect{f},\mathbf{A}\ast \delta_{t}\mathbf{e}_j\rangle|= \left|\sum_{i=1}^{n}\langle f_i,a_{ij}\ast \delta_{t}\rangle\right|\leq  \sum_{i=1}^{n}\|f_i\|_\infty\|a_{ij}\ast \delta_t\|_1\leq \|\vect{f}\|_\infty\|\mathbf{A}\|_1.
	\end{align*}
	With this, since $KS=S$, we have
	\begin{align*}
		\sup_{s\in S}\sup_{t\in K}\max_{1\leq j\leq n}\rho((s\cdot x_j) (t),0)&=\sup_{s\in S}\sup_{t\in K}\max_{1\leq j\leq n}\rho(x_j (ts),0)\\
		&=\sup_{s\in S}\max_{1\leq j\leq n}\rho(x_j(s),0)=\|\vect{f}\|_\infty\geq \frac{1}{\|\mathbf{A}\|_1}.
	\end{align*}
	Taking an enumeration $\{s_1,s_2,\dots\}$ of $S$, there is some $r\geq 1$ such that $K\subseteq \{s_1,\dots,s_r\}$.
	Since $\rho$ is bounded by $1$, the metric $\rho/(1+\rho)$ is bounded below by $\rho/2$, we have
	$$d(\vect{y},\vect{y}')\geq \max_{1\leq j\leq n}\frac{\rho(y_j(t),y_j'(t))}{2^{r}(1+\rho(y_j(t),y_j'(t)))} \geq\frac{1}{2^{r+1}}\max_{1\leq j\leq n}\rho(y_j(t),y_j'(t))$$ 
	for all $\vect{y}=(y_1,\dots,y_n),\vect{y}'=(y_1',\dots,y_n')\in (\mathbb{T}^S)^n$ and $t\in K$, which implies
	$$\sup_{s\in S}d(s\cdot \vect{x},0)\geq\frac{1}{2^{r+1}}\sup_{s\in S}\sup_{t\in K}\max_{1\leq j\leq n}\rho((s\cdot x_j) (t),0)\geq\frac{1}{2^{r+1}\|\mathbf{A}\|_1}.$$
	Therefore $\epsilon\coloneq (2^{r+1}\|\mathbf{A}\|_1)^{-1}$ is an expansivity constant for $S\acts X_\omega$. Since $\omega\leq J$, by duality we have $X_J\subseteq X_\omega$, yielding the expansivity of $S\acts X_J$.
\end{proof}

It turns out that the existence of a finite $K\subseteq S$ with $S=KS$ is necessary for the equivalence between (i) and (ii) in Proposition \ref{prop:expansivity_matrix_general_version}. This is already implicitly contained in \cite[Proposition 4.2]{ceccherini2025topological} for the case where $n=1$ and $J=m\Z[S]$ for some $m\geq 2$.

\theoremA*

\begin{proof}
 It follows from Proposition \ref{prop:expansivity_matrix_general_version} 	that (i) implies (ii), so let us prove that (ii) implies (i). Assume (ii), and suppose that $KS\subsetneq S$ for every finite $K\subseteq S$. The algebraic semigroup action of $S$ upon the closed subgroup $X\leq \mathbb{T}^S$ given by
	$$X=\{x\in \mathbb{T}^S:x(s)\in\{0,1/2\}\text{ for all }s\in S\}$$
	cannot be expansive by \cite[Proposition 4.2]{ceccherini2025topological}, but we include a brief proof for completeness. For any given $\epsilon>0$, there exists a finite $K\subseteq S$ such that $d(x,y)<\epsilon$ whenever $x$ and $y$ coincide on $K$. Choose an element $s_K\in S-KS$, and let $x,y\in X$ be equal over $S-\{s_K\}$, and such that $x(s_K)\neq y(s_K)$. Then $ts\neq s_K$ for every $s\in S$ and $t\in K$, so
	$$(s\cdot x)(t)=x(ts)=y(ts)=(s\cdot y)(t).$$
	Hence, $s\cdot x$ and $s\cdot y$ coincide at $K$ for all $s\in S$, implying $d(s\cdot x,s\cdot y)<\epsilon$ for all $s\in S$. As this has been done for arbitrary $\epsilon>0$, we conclude that $\sup_{s\in S} d(s\cdot x,s\cdot y)=0$ but $x\neq y$, so that $S\acts X$ is not expansive.
	
	Observe that $x(s)\in\{0,1/2\}$ if and only if $\langle x, 2\delta_s\rangle=2x(s)=0$. Hence, we have that $X=\dual{[\Z[S]/2\Z[S]]}$. Thus, $X$ is the algebraic action associated with the right $\Z[S]$-module $J=2\Z[S]$. Now, if $f\in J^\perp$, then for every $t\in S$ we have
	$$0=\langle f, 2\delta_t\rangle=2\sum_{r\in S}f(r)\delta_t(r)=2f(t),$$
	so $f=0$. Hence, $J^\perp=\{0\}$, but $S\acts X_J$ is non-expansive.
\end{proof}

Taking into consideration Theorem \ref{thm:meta_characterization}, we introduce the following definition.

\begin{definition}
	A semigroup $S$ will be called \textbf{expansive} if there exists a finite subset $K\subseteq S$ such that $S=KS$.
\end{definition}

\subsection{Expansivity and invertibility}\label{subsec:invertibility}
The purpose of this subsection is to study expansivity of algebraic actions for a semigroup $S$ such that $\ell^1(S)$ admits a left identity. We start by observing that in this situation, $S$ always satisfies the hypothesis of Theorem \ref{thm:meta_characterization}.

\begin{lemma}\label{lemma:unital_integral_ring}
	If $\ell^1(S)$ has a left identity, then $S$ is expansive.
\end{lemma}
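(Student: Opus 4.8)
The plan is to unpack what a left identity $e\in\ell^1(S)$ tells us at the level of semigroup elements, and then exploit the summability of $e$ to extract a \emph{finite} set $K$ with $S=KS$. First I would record the defining relation: if $e$ is a left identity, then in particular $e\ast\delta_t=\delta_t$ for every $t\in S$. Expanding the convolution and evaluating at $s\in S$ gives
$$\sum_{r\in S:\,rt=s}e(r)=\delta_t(s),$$
and specializing to $s=t$ yields the key identity $\sum_{r\in S:\,rt=t}e(r)=1$ for every $t\in S$. In particular the set $\{r\in S:rt=t\}$ always meets the support of $e$, which already shows $S=\operatorname{supp}(e)\cdot S$; the difficulty is that $\operatorname{supp}(e)$ need not be finite.

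The main point---and the step I expect to carry the real content---is to truncate $e$ to a finite set while keeping the identity above nonzero. Since $e\in\ell^1(S)$, its tail is summable, so I can choose a finite subset $K\subseteq S$ with $\sum_{r\in S\setminus K}|e(r)|<1$. Then for an arbitrary $t\in S$ I would split
$$1=\sum_{r\in S:\,rt=t}e(r)=\sum_{r\in K:\,rt=t}e(r)+\sum_{r\in S\setminus K:\,rt=t}e(r),$$
and bound the second sum by $\sum_{r\in S\setminus K}|e(r)|<1$ in absolute value. The reverse triangle inequality then forces $\bigl|\sum_{r\in K:\,rt=t}e(r)\bigr|>0$, so the set $\{r\in K:rt=t\}$ is nonempty.

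Finally I would conclude: for each $t\in S$ there exists $r\in K$ with $rt=t$, hence $t=rt\in KS$. As $t$ was arbitrary, $S=KS$ with $K$ finite, which is precisely the assertion that $S$ is expansive. I do not expect any subtlety in the algebra; the one place demanding care is the truncation estimate, where one must choose the finite set $K$ \emph{before} fixing $t$ (so that the same $K$ works uniformly for all $t$), and make sure the bound $\sum_{r\in S\setminus K}|e(r)|<1$ is strict, so that the reverse triangle inequality delivers a strictly positive lower bound and hence a genuinely nonempty intersection.
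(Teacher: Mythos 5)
Your argument is correct and is essentially the paper's proof: both truncate the left identity to a finite set $K$ whose complement carries $\ell^1$-mass less than $1$, and use the identity $\sum_{r:\,rt=t}e(r)=1$ (coming from $e\ast\delta_t=\delta_t$) to force $\{r\in S:rt=t\}\cap K\neq\varnothing$, hence $t\in KS$. The only cosmetic difference is that the paper first passes to $\mathrm{Re}\{e\}$, whereas your reverse-triangle-inequality estimate handles the complex-valued $e$ directly; both are fine.
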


\begin{proof}
	Let $e\in\ell^1(S)$ be a left identity. Then $\mathrm{Re}\{e\}\in\ell^1(S)$ is a left identity too. Let $K\subseteq S$ be a finite subset such that $\|\mathrm{Re}\{e\}\mathbf{1}_{K^\text{c}}\|_1<1$. Note that, for every $s\in S$ we have
	$$1=\delta_s(s)=(\delta_s\ast \mathrm{Re}\{e\})(s)=\sum_{t: st=s}\mathrm{Re}\{e\}(t),$$
	and hence $\{t\in S:st=s\}\cap K\neq \varnothing$. Indeed, if it were not the case, we would have
	$$1=\sum_{t\in S:st=s}\mathrm{Re}\{e\}(t)\leq \sum_{t\in K^{\text{c}}}|\mathrm{Re}\{e\}(t)|=\|\mathrm{Re}\{e\}\mathbf{1}_{K^{\text{c}}}\|_1<1,$$
	which is absurd. Therefore, for every $s\in S$ there exists $t\in K$ such that $s=ts\in KS$, and we conclude that $S=KS$. 
\end{proof}

The characterizations in Proposition \ref{prop:expansivity_matrix_general_version} were inspired by the group-theoretical versions from \cite[Lemma 6.8]{schmidt2012dynamical}, \cite[Theorem 8.1]{einsiedler2001algebraic} and \cite[Theorem 3.2]{deninger2007expansive}. Nonetheless, we closely followed the arguments in \cite[\S 3]{chung2015homoclinic}, where the authors provide a characterization of expansivity that involves the Banach $^*$-algebra structure of $\mathrm{M}_n(\ell^1(G))$ (see, e.g., \cite[Lemma 3.8]{chung2015homoclinic}). This language proves useful to obtain a characterization in terms of invertibility conditions in $\mathrm{M}_n(\ell^1(G))$, but is not available for actions of general semigroups. It suffices, however, to frame things in terms of the dual maps associated with the convolution product.

\begin{definition}
	The \textbf{dual convolution product} $\star\colon \ell^\infty(S)\times\ell^1(S)\to \ell^\infty(S)$ is given by
	$$(f\star a)(s)=\sum_{t\in S}f(ts)a(t)$$
	for all $f\in\ell^\infty(S)$ and $a\in\ell^1(S)$.
\end{definition}

We regard the dual convolution product as a product $\star\colon \ell^\infty(S)^n\times \mathrm{M}_n(\ell^1(S))\to \ell^\infty(S)^n$ for every $n\geq 1$, as explained in \S \ref{prel:product_matrices}.

\begin{proposition}\label{prop:Banach_module}
	We have $\langle \vect{f},\mathbf{A}\ast \vect{b}\rangle=\langle \vect{f}\star \mathbf{A},\vect{b}\rangle$ for all $\vect{f}\in \ell^\infty(S)^n$, $\mathbf{A}\in\mathrm{M}_{n\times k}(\ell^1(S))$ and $\vect{b}\in\ell^1(S)^m$. In particular, for all $n\in\N$ the product $\star \colon \ell^\infty(S)^n\times \mathrm{M}_n(\ell^1(S))\to \ell^\infty(S)^n$ is the dual right Banach $\mathrm{M}_n(\ell^1(S))$-module structure on $\ell^\infty(S)^n$, associated with the left Banach $\mathrm{M}_n(\ell^1(S))$-module structure $\ast \colon \mathrm{M}_n(\ell^1(S))\times \ell^1(S)^n\to \ell^1(S)^n$.
\end{proposition}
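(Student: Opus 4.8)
The plan is to first reduce the matrix identity to the scalar case $n=k=1$ and then verify the latter by a direct Fubini-type computation. Writing $\vect{f}=(f_1,\dots,f_n)$, $\mathbf{A}=(a_{ij})_{i,j}$ and $\vect{b}=(b_1,\dots,b_k)$, and unfolding the definitions of $\ast$ and $\star$ from \S\ref{prel:product_matrices} together with the bilinearity of the pairing, I would expand both sides into finite double sums. Since the $i$-th coordinate of $\mathbf{A}\ast\vect{b}$ is $\sum_{j}a_{ij}\ast b_j$ and the $j$-th coordinate of $\vect{f}\star\mathbf{A}$ is $\sum_{i}f_i\star a_{ij}$, the left-hand side becomes $\sum_{i,j}\langle f_i, a_{ij}\ast b_j\rangle$ and the right-hand side becomes $\sum_{i,j}\langle f_i\star a_{ij}, b_j\rangle$. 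Hence it suffices to establish the scalar adjunction
$$\langle f, a\ast b\rangle=\langle f\star a, b\rangle \qquad (f\in\ell^\infty(S),\ a,b\in\ell^1(S)).$$

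For the scalar identity, I would expand the left-hand side using the definitions of the pairing and the convolution product,
$$\langle f, a\ast b\rangle=\sum_{s\in S}f(s)\!\!\sum_{\substack{r,t\in S\\ rt=s}}\!\!a(r)b(t)=\sum_{(r,t)\in S\times S}f(rt)a(r)b(t),$$
where the re-indexing simply groups the pairs $(r,t)$ by their product $s=rt$. On the other side, inserting the definition of the dual convolution product gives $(f\star a)(t)=\sum_{r\in S}f(rt)a(r)$, whence
$$\langle f\star a, b\rangle=\sum_{t\in S}\Big(\sum_{r\in S}f(rt)a(r)\Big)b(t)=\sum_{(r,t)\in S\times S}f(rt)a(r)b(t),$$
and the two expressions coincide. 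The rearrangements and the interchange of summation are all legitimate because the double series converges absolutely, as $\sum_{r,t}|f(rt)|\,|a(r)|\,|b(t)|\leq \|f\|_\infty\|a\|_1\|b\|_1<\infty$; this absolute convergence is really the only point that needs any care, and it is routine.

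Finally, for the \enquote{in particular} assertion, I would appeal to the fact that $\ell^\infty(S)^n$ is the Banach-space dual of $\ell^1(S)^n$ under the canonical pairing, so that an element of $\ell^\infty(S)^n$ is uniquely determined by the functional it induces on $\ell^1(S)^n$. The dual right $\mathrm{M}_n(\ell^1(S))$-module structure on $\ell^\infty(S)^n$ associated with the left module $\ast$ is, by definition, the action sending $\mathbf{A}$ to the functional $\vect{b}\mapsto \langle \vect{f}, \mathbf{A}\ast\vect{b}\rangle$; the adjunction identity just proved says precisely that this functional is represented by $\vect{f}\star\mathbf{A}$. By uniqueness of the representing element, $\star$ coincides with the dual module structure, and in particular inherits the right-module axioms and the contractivity estimate $\|\vect{f}\star\mathbf{A}\|_\infty\leq \|\vect{f}\|_\infty\|\mathbf{A}\|_1$ for free.
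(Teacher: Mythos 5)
Your proposal is correct and follows essentially the same route as the paper: both reduce the identity to the scalar adjunction $\langle f,a\ast b\rangle=\langle f\star a,b\rangle$ via bilinearity and verify it by the same Fubini-type re-indexing of the double sum (you are in fact slightly more explicit about the absolute convergence justifying the interchange), and both conclude the \enquote{in particular} part by identifying $\vect{f}\mapsto\vect{f}\star\mathbf{A}$ as the dual of $\vect{b}\mapsto\mathbf{A}\ast\vect{b}$ and extracting the bound $\|\vect{f}\star\mathbf{A}\|_\infty\leq\|\vect{f}\|_\infty\|\mathbf{A}\|_1$ from the pairing. No gaps.
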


\begin{proof}
	For all $\vect{f}=(f_1,\dots,f_n)\in\ell^\infty(S)^n$, $\mathbf{A}=(a_{ij})_{i,j}\in\mathrm{M}_{n\times k}(\ell^1(S))$ and $\vect{b}=(b_1,\dots,b_k)\in \ell^1(S)^k$, it holds that
	\begin{align*}
		\langle\vect{f},\mathbf{A}\ast \vect{b}\rangle &= \sum_{j=1}^{k}\sum_{i=1}^{n}\langle f_i,a_{ij}\ast b_j\rangle = \sum_{j=1}^{k}\sum_{i=1}^{n} \sum_{t\in S}f_i(t)\sum_{rs=t}a_{ij}(r)b_j(s)\\
		&= \sum_{j=1}^{k}\sum_{i=1}^{n} \sum_{r,s\in S}f(rs)a_{ij}(r)b(s)=\sum_{j=1}^{k}\sum_{i=1}^{n} \sum_{s\in S}(f\star a_{ij})(s)b_j(s)\\
		&= \sum_{j=1}^{k} \bigg{\langle}\sum_{i=1}^{n}f_i\star a_{ij}, b_j\bigg{\rangle}=\langle \vect{f}\star \mathbf{A},\vect{b}\rangle.
	\end{align*}
	Hence the first claim is proved. The left $\mathrm{M}_n(\ell^1(S))$-module structure $\ast \colon \mathrm{M}_n(\ell^1(S))\times \ell^1(S)^n\to \ell^1(S)^n$ is determined by the maps $\psi_\mathbf{A}\colon \ell^1(S)^n\to \ell^1(S)^n$, $\vect{b}\mapsto \mathbf{A}\ast \vect{b}$. What we have just proven implies that for all $\mathbf{A}\in\mathrm{M}_n(\ell^1(S))$, the map $\varphi_\mathbf{A}\colon \ell^\infty(S)^n\to \ell^\infty(S)$, $\vect{f}\mapsto \vect{f}\star \mathbf{A}$ is exactly the dual map of $\psi_\mathbf{A}$. Hence, the maps $\varphi_\mathbf{A}$ determine a right $\mathrm{M}_n(\ell^1(S))$-module structure, dual to $\ast \colon \mathrm{M}_n(\ell^1(S))\times \ell^1(S)^n\to \ell^1(S)^n$. Finally, the fact that
	$$\|\vect{f}\star \mathbf{A}\|_\infty=\sup_{\|\vect{b}\|_1\leq 1}|\langle\vect{f}\star \mathbf{A},\vect{b}\rangle|=\sup_{\|\vect{b}\|_1\leq 1}|\langle\vect{f},\mathbf{A}\ast\vect{b}\rangle|\leq \sup_{\|\vect{b}\|_1\leq 1}\|\vect{f}\|_\infty\|\mathbf{A}\|_1\|\vect{b}\|_1\leq \|\vect{f}\|_\infty\|\mathbf{A}\|_1$$
	implies that said right $\mathrm{M}_n(\ell^1(S))$-module structure is Banach.
\end{proof}

\begin{proposition}\label{coro:expansivity_and_dual_convolution}
	Let $S$ be an expansive semigroup. Then, for every $\mathbf{A}\in\mathrm{M}_{n\times k}(\Z[S])$ the following are equivalent.
	\begin{enumerate}
		\item[\textup{(i)}] The action $S\acts X_{\mathbf{A}\Z[S]^k}$ is expansive.
		\item[\textup{(ii)}] The map $\varphi_\mathbf{A}\colon \ell^\infty(S)^n\to \ell^\infty(S)^k$ given by $\vect{f}\mapsto \vect{f}\star \mathbf{A}$ is injective.
	\end{enumerate}
\end{proposition}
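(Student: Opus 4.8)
The plan is to read condition (i) through Proposition \ref{prop:expansivity_matrix_general_version} and condition (ii) through Proposition \ref{prop:Banach_module}, and to observe that both collapse to the single statement $J^\perp = \{0\}$, where $J = \mathbf{A}\Z[S]^k$ is the right $\Z[S]$-submodule generated by the columns of $\mathbf{A}$. The whole argument is a translation between the annihilator and the kernel of $\varphi_\mathbf{A}$, so no new estimate is needed beyond what the preceding results already supply.

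First I would invoke the hypothesis that $S$ is expansive, so that Proposition \ref{prop:expansivity_matrix_general_version} applies to $J = \mathbf{A}\Z[S]^k$ and yields the equivalence of (i) with $J^\perp = \{0\}$. This reduces the task to proving that $J^\perp = \{0\}$ is equivalent to the injectivity of $\varphi_\mathbf{A}$, for which it suffices to identify $J^\perp$ with $\ker \varphi_\mathbf{A}$.

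To do this I would unwind the annihilator. Since every element of $J$ is of the form $\mathbf{A} \ast \vect{b}$ for some $\vect{b} \in \Z[S]^k$, an element $\vect{f} \in \ell^\infty(S)^n$ lies in $J^\perp$ precisely when $\langle \vect{f}, \mathbf{A} \ast \vect{b}\rangle = 0$ for all $\vect{b} \in \Z[S]^k$. By Proposition \ref{prop:Banach_module} we have $\langle \vect{f}, \mathbf{A} \ast \vect{b}\rangle = \langle \vect{f} \star \mathbf{A}, \vect{b}\rangle = \langle \varphi_\mathbf{A}(\vect{f}), \vect{b}\rangle$, so $\vect{f} \in J^\perp$ iff $\langle \varphi_\mathbf{A}(\vect{f}), \vect{b}\rangle = 0$ for every $\vect{b} \in \Z[S]^k$. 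Letting $\vect{b}$ range over the elements $\delta_s \mathbf{e}_j$ with $s \in S$ and $1 \leq j \leq k$ recovers each coordinate value $(\varphi_\mathbf{A}(\vect{f}))_j(s)$, so the vanishing of all these pairings is exactly the statement $\varphi_\mathbf{A}(\vect{f}) = 0$. Hence $J^\perp = \ker \varphi_\mathbf{A}$.

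Since $\varphi_\mathbf{A}$ is linear, $\ker \varphi_\mathbf{A} = \{0\}$ is equivalent to injectivity, and combining this with the first step gives the chain (i) $\iff J^\perp = \{0\} \iff$ (ii). There is no genuine obstacle: the content lies entirely in the duality identity of Proposition \ref{prop:Banach_module}, which moves $\mathbf{A}$ across the canonical pairing, together with the characterization of (i) in terms of $J^\perp$ already established in Proposition \ref{prop:expansivity_matrix_general_version}. The only point deserving a moment of care is the passage from \enquote{$\vect{f}$ annihilates all of $J$} to \enquote{$\vect{f}$ annihilates the generators $\mathbf{A} \ast \vect{b}$} and then to the coordinatewise vanishing of $\varphi_\mathbf{A}(\vect{f})$, which rests on $J$ being generated as a right $\Z[S]$-module by the columns of $\mathbf{A}$.
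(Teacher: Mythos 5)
Your proposal is correct and follows essentially the same route as the paper: both identify $(\mathbf{A}\Z[S]^k)^\perp$ with $\ker\varphi_\mathbf{A}$ via the duality identity of Proposition \ref{prop:Banach_module} and then invoke the characterization of expansivity by triviality of the annihilator (the paper cites Theorem \ref{thm:meta_characterization}, you cite Proposition \ref{prop:expansivity_matrix_general_version}, which is the same content). No gaps.
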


\begin{proof}
	Note that, for every $\mathbf{A}\in\mathrm{M}_{n\times k}(\Z[S])$, we have $(\mathbf{A}\Z[S]^k)^\perp=\operatorname{ker}(\varphi_\mathbf{A})$. Indeed, we know from Proposition \ref{prop:Banach_module} that $\langle \vect{f},\mathbf{A}\ast \vect{b}\rangle=\langle \vect{f}\star \mathbf{A},\vect{b}\rangle$ for all $\vect{f}\in \ell^\infty(S)^n$ and $\vect{b}\in\ell^1(S)^m$. Hence, if $\vect{f}\star \mathbf{A}=0$, then $\langle \vect{f},\mathbf{A}\ast \vect{b}\rangle=0$ for all $\vect{b}\in\Z[S]^k$, so $\vect{f}\in (\mathbf{A}\Z[S]^k)^\perp$. On the other hand, if $\vect{f}\in (\mathbf{A}\Z[S]^k)^\perp$, we have that $\langle \vect{f}\star \mathbf{A},\vect{b}\rangle=0$ for all $\vect{b}\in\Z[S]^k$, which immediately implies $\vect{f}\star\mathbf{A}=0$. The equivalence between (i) and (ii) is now a direct consequence of Theorem \ref{thm:meta_characterization}.
\end{proof}
 
We will work with the following notion of invertibility in the case where $\ell^1(S)$ is left unital.

\begin{definition}
	Let $\mathfrak{A}$ be a Banach algebra. If $e\in \mathfrak{A}$ is a left identity, an element $a\in\mathfrak{A}$ will be called \textbf{right} $e$\textbf{-invertible} if there exists $b\in \mathfrak{A}$ such that $ab=e$.
\end{definition}

\begin{proposition}\label{thm:expansivity_criterion}
	Assume that $\ell^1(S)$ is left unital, and let $\mathbf{I}$ be a left identity in $\mathrm{M}_n(\ell^1(S))$. If $J\leq \Z[S]^n$ is a right $\Z[S]$-submodule and there exists an element $\mathbf{A}\in\mathrm{M}_n(\Z[S])$ that is right $\mathbf{I}$-invertible in $\mathrm{M}_n(\ell^1(S))$ such that $\mathbf{A}\Z[S]^n\leq J$, then the action $S\acts X_J$ is expansive.
\end{proposition}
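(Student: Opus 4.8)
The plan is to reduce the expansivity of $S\acts X_J$ to the injectivity criterion of Proposition~\ref{coro:expansivity_and_dual_convolution}, and then to deduce that injectivity by converting the right $\mathbf{I}$-invertibility of $\mathbf{A}$ into a one-sided inverse for the dual map $\varphi_\mathbf{A}$. First, since $\ell^1(S)$ is left unital, Lemma~\ref{lemma:unital_integral_ring} guarantees that $S$ is expansive, so Proposition~\ref{coro:expansivity_and_dual_convolution} is available for the square matrix $\mathbf{A}\in\mathrm{M}_n(\Z[S])$: the action $S\acts X_{\mathbf{A}\Z[S]^n}$ is expansive precisely when the map $\varphi_\mathbf{A}\colon\ell^\infty(S)^n\to\ell^\infty(S)^n$, $\vect{f}\mapsto\vect{f}\star\mathbf{A}$, is injective. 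I would first establish this injectivity, and then transfer expansivity from $X_{\mathbf{A}\Z[S]^n}$ to $X_J$.

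The heart of the argument is an observation about handedness: although $\star$ is a \emph{right} $\mathrm{M}_n(\ell^1(S))$-module action (Proposition~\ref{prop:Banach_module}), a \emph{left} identity $\mathbf{I}$ of the algebra still acts as the identity on $\ell^\infty(S)^n$, that is, $\vect{f}\star\mathbf{I}=\vect{f}$ for every $\vect{f}$. To see this I would first check that $\mathbf{I}\ast\vect{b}=\vect{b}$ for all $\vect{b}\in\ell^1(S)^n$: applying the defining relation $\mathbf{I}\mathbf{C}=\mathbf{C}$ to the matrix $\mathbf{C}\in\mathrm{M}_n(\ell^1(S))$ whose first column is $\vect{b}$ and whose other columns vanish, and reading off the first column of $\mathbf{I}\mathbf{C}=\mathbf{C}$, yields $\mathbf{I}\ast\vect{b}=\vect{b}$. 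Dualizing via Proposition~\ref{prop:Banach_module} then gives $\langle\vect{f}\star\mathbf{I},\vect{b}\rangle=\langle\vect{f},\mathbf{I}\ast\vect{b}\rangle=\langle\vect{f},\vect{b}\rangle$ for all $\vect{b}$, whence $\vect{f}\star\mathbf{I}=\vect{f}$. Now, writing $\mathbf{B}\in\mathrm{M}_n(\ell^1(S))$ for a right $\mathbf{I}$-inverse of $\mathbf{A}$, so that $\mathbf{A}\mathbf{B}=\mathbf{I}$, the right-module associativity of $\star$ shows that if $\vect{f}\star\mathbf{A}=0$ then $\vect{f}=\vect{f}\star\mathbf{I}=\vect{f}\star(\mathbf{A}\mathbf{B})=(\vect{f}\star\mathbf{A})\star\mathbf{B}=0$. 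Hence $\varphi_\mathbf{A}$ is injective and $S\acts X_{\mathbf{A}\Z[S]^n}$ is expansive.

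It remains to pass from $X_{\mathbf{A}\Z[S]^n}$ to $X_J$. Since $\mathbf{A}\Z[S]^n\leq J$, the defining conditions for $X_J$ are more restrictive, so by duality $X_J\subseteq X_{\mathbf{A}\Z[S]^n}$; as $X_J$ is a closed invariant subset, the expansivity constant found for $X_{\mathbf{A}\Z[S]^n}$ works verbatim for the restricted action, giving expansivity of $S\acts X_J$. The only genuinely delicate point is the handedness issue in the middle step---confirming that a left identity of the algebra acts as a two-sided identity on the relevant module---while the remaining steps are direct applications of the cited results.
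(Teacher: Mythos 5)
Your proposal is correct, and its overall skeleton matches the paper's: establish that the left identity $\mathbf{I}$ acts as the identity on $\ell^\infty(S)^n$ under $\star$, use the right $\mathbf{I}$-inverse of $\mathbf{A}$ together with associativity of the dual module action to kill $\operatorname{ker}(\varphi_\mathbf{A})=(\mathbf{A}\Z[S]^n)^\perp$, and then pass from $\mathbf{A}\Z[S]^n$ to $J$ by the inclusion $X_J\subseteq X_{\mathbf{A}\Z[S]^n}$ (equivalently, $J^\perp\subseteq(\mathbf{A}\Z[S]^n)^\perp$ plus Theorem \ref{thm:meta_characterization}). Where you genuinely diverge is in the proof of the central claim $\vect{f}\star\mathbf{I}=\vect{f}$. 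The paper argues on the semigroup level: it first shows $f\star e=f$ for a left identity $e\in\ell^1(S)$ by exploiting $e\ast\delta_r=\delta_r$ together with the factorization $s=rr'$ available because $S=KS$ (Lemma \ref{lemma:unital_integral_ring}), and then performs an entrywise analysis of $\mathbf{I}=(e_{ij})_{i,j}$ to reduce the matrix case to the scalar one. You instead read off $\mathbf{I}\ast\vect{b}=\vect{b}$ directly from the first column of the identity $\mathbf{I}\mathbf{C}=\mathbf{C}$ applied to the matrix $\mathbf{C}$ with first column $\vect{b}$, and then transport this through the duality $\langle\vect{f}\star\mathbf{I},\vect{b}\rangle=\langle\vect{f},\mathbf{I}\ast\vect{b}\rangle$ of Proposition \ref{prop:Banach_module}, using that $\ell^1(S)^n$ separates points of $\ell^\infty(S)^n$. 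This is cleaner and slightly more general: your proof of the claim itself does not invoke the expansivity of $S$ at all (you only need Lemma \ref{lemma:unital_integral_ring} to make Proposition \ref{coro:expansivity_and_dual_convolution}, or Theorem \ref{thm:meta_characterization}, applicable at the end), and it bypasses the paper's case analysis of the entries $e_{ij}$. Both arguments are complete; yours buys economy, the paper's makes explicit the pointwise behaviour of left identities of $\ell^1(S)$, which it reuses in spirit elsewhere.
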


\begin{proof}
	We first claim that $\vect{f}\star\mathbf{I}= \vect{f}$ for every $\vect{f}\in \ell^\infty(S)^n$. To show this, we first prove that for a left identity $e\in\ell^1(S)$, we have $f\star e= f$ for all $f\in\ell^\infty(S)$. Indeed, for all $r\in S$ it holds that $e\ast\delta_r= \delta_r$, so $(f\star e)\star \delta_r=f\star (e\ast \delta_r)=f\star \delta_r$. 
	By Lemma \ref{lemma:unital_integral_ring} $S$ is expansive, thus for every $s\in S$ we can write $s=rr'$ for some $r,r'\in S$. We then have
	\begin{align*}
		(f\star e-f)(s)&=((f\star e-f)\star \delta_r)(r')=(f\star\delta_r-f\star \delta_r)(r')=0,
	\end{align*}
	and we conclude that $f\star e=f$ as desired. Next, note that if $a\in\ell^1(S)$ is such that $a\ast b= 0$ for all $b\in \ell^1(S)$ then it holds that $f\star a= 0$ for all $f\in \ell^\infty(S)$. Indeed, for all $r\in S$ we have $a\ast \delta_r= 0$, implying
	$$(f\star a)(s)=((f\star a)\star\delta_r)(r')=(f\star (a\ast \delta_r))(r')=0.$$ 
	Finally, write $\mathbf{I}=(e_{ij})_{i,j}$. By considering the products $\mathbf{I}\ast \mathbf{A}_{ij}$, where $\mathbf{A}_{ij}$ is the matrix with all entries equal to zero except for the entry $(i,j)$ which is an arbitrary $a\in\ell^1(S)$, it is readily checked that $e_{jj}$ is a left identity in $\ell^1(S)$ for all $1\leq j\leq n$, and that $e_{ij}$ satisfies $e_{ij}\ast b= 0$ for every $b\in \ell^1(S)$ and $i\neq j$. Hence, for every $\vect{f}=(f_1,\dots,f_n)\in \ell^\infty(S)^n$ we get
	$$\mathbf{f}\star\mathbf{I}=\sum_{i=1}^n\sum_{j=1}^n(f_j\star e_{ij})\mathbf{e}_i=\sum_{i=1}^nf_i\mathbf{e}_i=\mathbf{f},$$
	and the claim is proven.
	
	The result now follows directly from the claim. Let $\vect{f}\in (\mathbf{A}\Z[S]^n)_\infty^\perp$ and $\mathbf{B}\in\mathrm{M}_n(\ell^1(S))$ such that $\mathbf{A}\ast \mathbf{B}=\mathbf{I}$. Then, as $\vect{f}\star\mathbf{I}= \vect{f}$, we have $\vect{f}=\vect{f}-(\vect{f}\star \mathbf{A})\star \mathbf{B}=\vect{f}-\vect{f}\star (\mathbf{A}\ast \mathbf{B}) =0$. Therefore $(\mathbf{A}\Z[S]^n)_\infty^\perp$ is trivial, and so is $J^\perp\subseteq (\mathbf{A}\Z[S]^n)^\perp$. Since $S$ is expansive, we conclude by Theorem \ref{thm:meta_characterization} that $S\acts X_J$ is an expansive action.
\end{proof}

\begin{remark}
	Proposition \ref{thm:expansivity_criterion} can be established for a semigroup $S$ such that $\ell^1(S)$ has a left approximate identity, in terms of the notion of approximate invertibility defined in \cite{esmeral2023approximately}. However, we will see in Proposition \ref{prop:KS=S+app_inv_implies_unital} that if $S$ is expansive and $\ell^1(S)$ admits a left approximate identity, then necessarily $\ell^1(S)$ has a left identity.
\end{remark}

\begin{lemma}\label{lemma:right_invertible}
	Let $\mathfrak{A}$ be a Banach algebra with a left identity $e$. Then, the set of right $e$-invertible elements of $\mathfrak{A}$ is open and non-empty.
\end{lemma}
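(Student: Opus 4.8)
The plan is to dispatch nonemptiness by exhibiting an explicit element and then prove openness by a Neumann-series argument carefully adapted to the fact that $e$ is only a \emph{left} identity. For nonemptiness, note that $e$ is itself right $e$-invertible: the defining relation $ex=x$ applied with $x=e$ gives $ee=e$, so $e$ is a right $e$-inverse of $e$ and the set in question contains $e$.

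For openness, suppose $a\in\mathfrak{A}$ is right $e$-invertible and fix $b$ with $ab=e$. The zero algebra is trivial, so I may assume $\mathfrak{A}\neq\{0\}$; then $e\neq 0$ and hence $b\neq 0$. I claim that every $a'$ with $\|a'-a\|<1/\|b\|$ is again right $e$-invertible, which shows that the open ball $B(a,1/\|b\|)$ lies in the set. Given such an $a'$, set $u\coloneq(a'-a)b$, so that
$$a'b=ab+(a'-a)b=e+u,\qquad \|u\|\leq\|a'-a\|\,\|b\|<1.$$
Since $\|u\|<1$, the series $w\coloneq\sum_{n\geq 0}(-1)^n u^n e$ (reading the $n=0$ term as $e$) is absolutely convergent, hence convergent in the Banach algebra $\mathfrak{A}$. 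The heart of the argument is the identity $(e+u)w=e$: using $ew=w$ one rewrites $(e+u)w=w+uw$, and since $uw=-\sum_{m\geq 1}(-1)^m u^m e$ while $w=e+\sum_{m\geq 1}(-1)^m u^m e$, the two series cancel termwise and leave exactly $e$. Therefore $a'(bw)=(a'b)w=(e+u)w=e$, so $bw$ is a right $e$-inverse of $a'$.

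The main obstacle is conceptual rather than computational: the usual proof that the invertible elements of a Banach algebra form an open set relies on a two-sided identity, so that a small perturbation $e+u$ of the identity is genuinely invertible via its Neumann series. Here $e+u$ need not be invertible at all; what the argument must produce, and all that is needed, is a \emph{right} inverse relative to $e$. The relation $ex=x$ is exactly what makes this possible, since it both yields $ew=w$ and neutralizes the factor $e$ occurring in each term $u^n e$, so that the telescoping collapses to $e$ rather than to an uncontrolled remainder. For this reason I would verify the identity $(e+u)w=e$ explicitly rather than appeal to any standard inversion lemma, which would silently presuppose two-sidedness.
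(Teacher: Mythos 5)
Your proof is correct and follows essentially the same route as the paper: both establish non-emptiness via $e$ itself and prove openness by a Neumann-series telescoping argument verified by hand so as not to presuppose a two-sided identity. The only cosmetic difference is that you form the series in the perturbation $u=(a'-a)b$ with a trailing factor of $e$ in each term, whereas the paper writes the candidate right inverse as $a^{-1}+\sum_{k\ge 1}a^{-1}(xa^{-1})^{k}$ for the perturbed element $a-x$; the telescoping is the same.
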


\begin{proof}
	Note that $e$ is clearly right $e$-invertible, so that the set of right $e$-invertible elements is non-empty. Let $a\in \mathfrak{A}$ be right $e$-invertible, and let $a^{-1}\in \mathfrak{A}$ be such that $aa^{-1}=e$. 
	Given $x\in \mathfrak{A}$ with $\|x\|<\|a^{-1}\|^{-1}$, define $S_n \coloneq a^{-1}+\sum_{k=1}^{n}a^{-1}(xa^{-1})^{k}$	and note that $\|xa^{-1}\|< 1$, so the series $\sum_{k=1}^{\infty}\|(xa^{-1})^{k}\|$ converges. Since $\mathfrak{A}$ is a complete normed space, every absolutely convergent series converges (see \cite[Theorem 2.8]{bollobas1990linear}), so we can define $S_\infty :=a^{-1}+\sum_{k=1}^{\infty}a^{-1}(xa^{-1})^{k}$. Thus,
	$$(a-x)S_\infty=\lim_{n\to\infty}\left(e+\sum_{k=1}^{n}(xa^{-1})^k-\sum_{k=1}^{n+1}(xa^{-1})^k\right)=\lim_{n\to\infty}(e-(xa^{-1})^{n+1})=e,$$
	showing that $a-x$ is right $e$-invertible. Hence, every element at a distance less than $\|a^{-1}\|^{-1}$ from $a$ is right $e$-invertible, and we get that the set of right $e$-invertible elements is open.
\end{proof}

We now proceed to expand the characterization from Proposition \ref{prop:expansivity_matrix_general_version} for finitely generated $J$. Note that, for a given left identity $\mathbf{I}\in \mathrm{M}_n(\ell^1(S))$, the real part $\mathrm{Re}\{\mathbf{I}\}$ is also a (possibly different) left identity in $\mathrm{M}_n(\ell^1(S))$.

\theoremB*

\begin{proof}
	The fact that (ii) implies (i) directly follows from Proposition \ref{thm:expansivity_criterion}. 
		
	We prove now that (i) implies (ii). By Corollary \ref{coro:expansivity_and_dual_convolution}, the map $\varphi_\mathbf{A}\colon \ell^\infty(S)^n\to\ell^\infty(S)^k$ given by $\vect{f}\mapsto \vect{f}\star \mathbf{A}$ is injective. Consider now the $\C$-linear map $\psi_{\mathbf{A}}\colon \ell^1(S^1)^k\to \ell^1(S)^n$ given by $\vect{a}\mapsto\mathbf{A}\ast \vect{a}$. By Proposition \ref{prop:Banach_module}, we know that $\varphi_{\mathbf{A}}$ is the dual map of $\psi_\mathbf{A}$. Now, $\psi_\mathbf{A}$ must have dense image in $\ell^1(S)^n$, because otherwise there exists an element $\vect{a}\in \ell^1(S)^n-\overline{\mathrm{im}(\psi)}$, and by the Hahn-Banach Theorem we may extend the zero functional on $\overline{\mathrm{im}(\psi)}$ to obtain an element $\vect{f}\in \overline{\mathrm{im}(\psi_\mathbf{A})}^\perp\subseteq \ell^\infty(S)^n$ such that $\langle \vect{f},\vect{a}\rangle\neq 0$. Thus, $\vect{f}$ is non-zero but $\langle \vect{f},\psi_{\mathbf{A}}(\vect{b})\rangle=0$ for all $\vect{b}\in \ell^1(S)^k$, so that $\varphi_\mathbf{A}(\vect{f})=\psi_\mathbf{A}^*(\vect{f})=0$, which contradicts the injectivity of $\varphi_\mathbf{A}$.
	
	Let $\epsilon>0$ and let $\vect{e}_1,\dots,\vect{e}_n$ be the rows of $\mathbf{I}$. Since $\mathrm{im}(\psi_\mathbf{A})$ is dense in $\ell^1(S)^n$, we may choose elements $\vect{a}_1,\dots,\vect{a}_n \in \ell^1(S^1)^k$ such that for all $1\leq i\leq n$ we have $\|\psi_\mathbf{A}(\vect{a}_{i})-\vect{e}_i\|_1<\epsilon/n$. Observe that, for all $1\leq i\leq n$,
	\begin{align*}
		\|\psi_\mathbf{A}(\vect{a}_{i})-\vect{e}_i\|_1&= \|\psi_\mathbf{A}(\mathrm{Re}\{\vect{a}_{i}\})-\mathrm{Re}\{\vect{e}_i\}+\mathrm{i}\,(\psi_\mathbf{A}(\mathrm{Im}\{\vect{a}_{i}\})-\mathrm{Im}\{\vect{e}_i\})\|_1\\
		&\geq \|\psi_\mathbf{A}(\mathrm{Re}\{\vect{a}_{i}\})-\mathrm{Re}\{\vect{e}_i\}\|_1.
	\end{align*} 
	Hence, $\|\mathbf{A}\ast\mathrm{Re}\{\vect{a}_i\}-\mathrm{Re}\{\vect{e}_i\}\|_1<\epsilon/n$. Since $\Q[S]^k$ is dense in $\ell^1(S^1,\R)^k$ and $\psi_\mathbf{A}$ is continuous, we can choose elements $\vect{b}_{i}\in \Q[S]^k$ such that $\|\mathbf{A}\ast\vect{b}_i-\mathrm{Re}\{\vect{e}_i\}\|_1<\epsilon/n$ for all $1\leq i\leq n$. Hence, defining $\mathbf{B}_\epsilon\in M_{k\times n}(\Q[S])$ to be the matrix whose $i$-th column is $\vect{b}_i$, we get 
	$$\|\mathbf{A}\ast\mathbf{B}_\epsilon-\mathrm{Re}\{\mathbf{I}\}\|_1=\sum_{i=1}^{n}\|\mathbf{A}\ast\vect{b}_i-\mathrm{Re}\{\vect{e}_i\}\|_1<\epsilon.$$
	By Lemma \ref{lemma:right_invertible}, the set of right $\mathrm{Re}\{\mathbf{I}\}$-invertible elements is an open subset of $\mathrm{M}_n(\ell^1(S))$. Hence, taking $\epsilon$ sufficiently small we can ensure that $\mathbf{A}\ast\mathbf{B}_\epsilon$ is right $\mathrm{Re}\{\mathbf{I}\}$-invertible. Furthermore, since the rows of $\mathbf{B}_\epsilon$ are elements from $\Q[S]^k$, we may take $m\in \N$ such that $m\mathbf{B}_\epsilon\in \mathrm{M}_{k\times n}(\Z[S])$. Therefore, the matrix $\mathbf{B}:=\mathbf{A}\ast m\mathbf{B}_\epsilon$ belongs to $\mathrm{M}_n(\Z[S])$, is right $\mathrm{Re}\{\mathbf{I}\}$-invertible and satisfies $\mathbf{B}\Z[S]^n\subseteq \mathbf{A}\Z[S]^k$. 
	
	Finally, assume that $\ell^1(S)$ has an identity $e$. It is already clear that (iii) implies (ii). To see that (i) implies (iii), first note that for all $s\in S$ we have
	$$\delta_s\ast e=\delta_s\ast \mathrm{Re}\{e\}+\mathrm{i}(\delta_s\ast \mathrm{Im}\{e\})=\delta_s\in \ell^1(S,\R).$$
	Thus, $\delta_s\ast \mathrm{Im}\{e\}=0$ for all $s\in S$, which implies $\mathrm{Im}\{e\}=e\ast \mathrm{Im}\{e\}=0$. Hence $e=\mathrm{Re}\{e\}$, and we conclude that $\mathbf{I}=\mathrm{Re}\{\mathbf{I}\}$ as well. The same argument of the proof of the fact that (i) implies (ii) allows us to construct a matrix $\mathbf{B}\in\mathrm{M}_n(\Z[S])$ arbitrarily close to $\mathbf{I}$ such that $\mathbf{B}\Z[S]^n\subseteq \mathbf{A}\Z[S]^k$. Since the set of invertible elements of $\mathrm{M}_n(\ell^1(S))$ is open, we can assume that such $\mathbf{B}$ is invertible.
\end{proof}

\begin{remark}
	Suppose that $\ell^1(S)$ is unital and let $\mathbf{A}\in\mathrm{M}_n(\Z[S])$ be such that the action $S\acts X_{\mathbf{A}\Z[S]^n}$ is expansive. Considering \cite[Theorem 3.2]{deninger2007expansive}, one would like to show that $\mathbf{A}$ is invertible in $\mathrm{M}_n(\ell^1(S))$. The equivalence between (i) and (iii) in Theorem \ref{thm:expansivity_invertibility} tells us that there exists $\mathbf{B}\in\mathrm{M}_n(\Z[S])$ that is invertible in $\mathrm{M}_n(\ell^1(S))$ satisfying $\mathbf{B}\Z[S]^n\subseteq \mathbf{A}\Z[S]^n$. It then follows that $\mathbf{B}\mathrm{M}_n(\ell^1(S))\subseteq \mathbf{A}\mathrm{M}_n(\ell^1(S))$ for every $n\geq 1$. Since the identity element $\mathbf{I}\in\mathrm{M}_n(\ell^1(S))$ belongs to $\mathbf{B}\mathrm{M}_n(\ell^1(S))$, there must exist $\mathbf{C}\in\mathrm{M}_n(\ell^1(S))$ such that $\mathbf{A}\ast\mathbf{C}=\mathbf{I}$, so $\mathbf{A}$ is right invertible in $\mathrm{M}_n(\ell^1(S))$. 
	
	For a unital ring, the property that every right invertible element is also left invertible is called direct finiteness. Kaplansky proved \cite[p. 122]{kaplansky1972fields} that $\mathrm{M}_n(\C[S])$ is directly finite for $n\geq 1$ if $S$ is a group. The nature of Kaplansky's proof immediately yields direct finiteness of $\mathrm{M}_n(\ell^1(S))$, and thus in the group case the invertibility of $\mathbf{A}$ follows. In the more general situation where $S$ is an inverse semigroup with $|E_S|<\infty$, we shall see in Example \ref{ex:inverse_semigroups} that $\ell^1(S)$ is unital. Furthermore, $S$ is a weak semilattice, so that the universal grupoid $\mathscr{G}(S)$ is Hausdorff by \cite[Theorem 4.20]{steinberg2010groupoid}. This implies, by \cite[Corollary 7.9]{steinberg2022stable} and the paragraph thereafter, that for the reduced $C^*$-algebra $C_r^*(S)$, $\mathrm{M}_n(C_r^*(S))$ is directly finite for every $n\geq 1$. Since $\ell^1(S)$ embeds as an algebra into $C_r^*(S)$ as convolution operators over $\ell^2(S)$, we get that $\mathrm{M}_n(\ell^1(S))$ is also stably finite, and the invertibility of $\mathbf{A}$ follows as well. We therefore conclude that if $S$ is an inverse semigroup with $|E_S|<\infty$, then for $\mathbf{A}\in\mathrm{M}_n(\Z[S])$, the action $S\acts X_{\mathbf{A}\Z[S]^n}$ is expansive if and only if $\mathbf{A}$ is invertible in $\mathrm{M}_n(\ell^1(S))$.
	
	One of the main results in the recent article \cite{ceccherini2025stable} implies that $\mathrm{M}_n(\C[S])$ is directly finite for all $n\geq 1$ if $S$ is a surjunctive monoid. We expect in this situation that $\mathrm{M}_n(\ell^1(S))$ is directly finite as well, in which case the above characterization for square matrices will hold.	
\end{remark}

\section{Expansive semigroups and semigroups with a unital convolution Banach algebra}\label{sec:concrete_semigroups}
The purpose of this final section is to study the hypotheses from Theorem \ref{thm:meta_characterization} and Theorem \ref{thm:expansivity_invertibility}: expansivity (i.e., the existence of a finite subset $K\subseteq S$ with $S=KS$), and the existence of a (left) identity in $\ell^1(S)$. We start by pointing out some results regarding the relationship between said hypotheses, and then proceed to provide some general and concrete examples of semigroups satisfying them.

\subsection{The relation between the hypotheses of Theorems \ref{thm:meta_characterization} and \ref{thm:expansivity_invertibility}}
As we already saw in Lemma \ref{lemma:unital_integral_ring}, expansivity of $S$ is always satisfied when $\ell^1(S)$ has a left identity. The following proposition gives a converse for that result under an additional hypothesis. There, a \textbf{left approximate identity} in a Banach algebra $\mathfrak{A}$ is a net $(e_{\alpha})_{\alpha}$ in $\mathfrak{A}$ such that $e_\alpha a\to a$ for every $a\in \mathfrak{A}$.

\begin{proposition}\label{prop:KS=S+app_inv_implies_unital}
	Let $S$ be a semigroup. Then $\ell^1(S)$ admits a left identity if and only if $S$ is expansive and $\ell^1(S)$ admits a left approximate identity.
\end{proposition}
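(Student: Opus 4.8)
The forward implication is immediate: a left identity $e$ is in particular a constant left approximate identity, and $S$ is expansive by Lemma~\ref{lemma:unital_integral_ring}. For the converse, the plan is to fix a finite $K\subseteq S$ with $S=KS$ and a left approximate identity $(e_\alpha)_\alpha$, and to manufacture an actual left identity out of them.

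The whole approach hinges on the observation that, because $S=KS$, \emph{being a left identity is a finite condition}: an element $e\in\ell^1(S)$ satisfies $e\ast a=a$ for all $a$ if and only if $e\ast\delta_k=\delta_k$ for every $k\in K$. One implication is trivial; for the other I would write an arbitrary $s\in S$ as $s=ks'$ with $k\in K$ and use associativity of convolution to get $e\ast\delta_s=(e\ast\delta_k)\ast\delta_{s'}=\delta_k\ast\delta_{s'}=\delta_s$, and then upgrade this to $e\ast a=a$ for all $a$ by density of finitely supported elements. Thus it suffices to solve the finite system $e\ast\delta_k=\delta_k$, $k\in K$.

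From the approximate identity I would record two facts. First, $e_\alpha\ast\delta_k\to\delta_k$ in norm for each of the finitely many $k\in K$; since $s=ks'$ gives $\|e_\alpha\ast\delta_s-\delta_s\|_1\le\|e_\alpha\ast\delta_k-\delta_k\|_1$, the left multiplications $L_{e_\alpha}\colon b\mapsto e_\alpha\ast b$ are uniformly bounded (by $C:=\sup_{\alpha,k}\|e_\alpha\ast\delta_k\|_1<\infty$) and converge to the identity in the strong operator topology. Second, evaluating $e_\alpha\ast\delta_k\to\delta_k$ at the point $k$ shows each set $V_k:=\{t\in S:tk=k\}$ is nonempty, so each individual equation $e\ast\delta_k=\delta_k$ is already solved by a point mass $\delta_{u_k}$ with $u_k\in V_k$. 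The task is therefore to pass from this approximate simultaneous solvability to an exact common solution; equivalently, to show that the image of the bounded map $\Theta\colon\ell^1(S)\to\ell^1(S)^{K}$, $e\mapsto(e\ast\delta_k)_{k\in K}$, actually contains $(\delta_k)_{k\in K}$ rather than merely its closure.

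I expect this last step to be the crux. A naive weak-$*$ cluster point of $(e_\alpha)$ in $\ell^1(S)$ is unavailable: the $\ell^1$-norms need not stay bounded, and even when they do, mass can escape to infinity (as it does for right-zero semigroups), so $\,\cdot\ast\delta_k$ is not weak-$*$ continuous and the limit can fail to be an identity. The route I would instead pursue is to produce a genuine solution. Using that convolution preserves positivity and that the target $\delta_k$ is a positive point mass, I would try to replace $(e_\alpha)$ by \emph{nonnegative}, finitely supported approximators $e$ with $\max_{k\in K}\|e\ast\delta_k-\delta_k\|_1<\epsilon$. For such an $e$ one has $\sum_{t\notin V_k}e(t)=\sum_{r\neq k}(e\ast\delta_k)(r)<\epsilon$ for each $k$, while $\|e\|_1\ge(e\ast\delta_k)(k)>1-\epsilon$; hence the mass of $e$ outside $U_K:=\bigcap_{k\in K}V_k=\{u:uk=k\text{ for all }k\in K\}$ is at most $|K|\,\epsilon$, so for small $\epsilon$ we get $U_K\neq\varnothing$, and then $\delta_u$ with $u\in U_K$ is the desired left identity by the reduction above. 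The main obstacle is therefore exactly the positivity step, equivalently the nonemptiness of $U_K$: justifying that a signed left approximate identity can be replaced by a nonnegative approximate left identity on the finite set $K$. This is where I anticipate the real work, and where the expansivity hypothesis $S=KS$ should be used decisively.
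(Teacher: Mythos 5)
Your forward direction and your reduction of the converse to the finite system $e\ast\delta_k=\delta_k$, $k\in K$, are both correct, and your observation that each $V_k=\{t\in S:tk=k\}$ is nonempty is exactly the first step of the paper's argument. The endgame, however, aims at a statement strictly stronger than the proposition and false in general: an element $u\in U_K=\bigcap_{k\in K}V_k$ for which $\delta_u$ is a left identity of $\ell^1(S)$ is, by your own reduction, a left identity element of the semigroup $S$ itself, and no such element need exist. The disjoint unions $S=\amalg(\Sigma_1,\dots,\Sigma_n)$ of \S\ref{sec:concrete_semigroups} (with $n\geq 2$ and each $\Sigma_i$ a monoid, say) are expansive and have left-unital $\ell^1(S)$, yet any $K$ with $S=KS$ must meet every $\Sigma_i$, and no single $u$ can fix elements of two different components, so $U_K=\varnothing$. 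Consequently the positivity step you flag as the crux is not merely difficult but impossible: in that example the left identity is $(1-n,e_1,\dots,e_n)$, with a genuinely negative coefficient on $\delta_z$, and your own estimate (nonnegative $e$ with $\max_{k}\|e\ast\delta_k-\delta_k\|_1<\epsilon$ forces $U_K\neq\varnothing$) shows that no nonnegative approximate left identity on $K$ can exist there. You correctly identify that the real issue is passing from $(\delta_k)_{k\in K}$ lying in the closure of the image of $\Theta$ to lying in the image itself, but the proposed resolution cannot work.

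The paper's proof takes a different shape precisely to avoid looking for a single fixing element. From the approximate identity one extracts only that every $s\in S$ admits some $r$ with $rs=s$; combined with $S=KS$ this produces a finite set $K'$ such that every $s\in S$ is fixed by some ($s$-dependent) element of $K'$, and a pruning argument replaces $K'$ by a finite set $F\subseteq E_S$ of idempotents with $S=FS$. The existence of a left identity in $\ell^1(S)$ is then quoted from \cite[Proposition 4.3]{dales2010banach}; the identity so obtained is in general a signed combination supported on several idempotents (compare $(1-n,e_1,\dots,e_n)$ above), never a point mass. To repair your argument you would need to replace the search for $u\in U_K$ by this idempotent-based, inclusion--exclusion-type construction.
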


\begin{proof}
	It is shown in \cite[Proposition 4.3]{dales2010banach} that if $\ell^1(S)$ has a left approximate identity and $S=FS$ for some finite $F\subseteq E_S$, then $\ell^1(S)$ has a left identity. We will show that if $S$ is expansive and $\ell^1(S)$ admits a left approximate identity, then such $F$ exists. Let $(e_\alpha)_{\alpha}$ be a left approximate identity in $\ell^1(S)$. Note that for every $s\in S$ we have $e_\alpha\ast \delta_s\to \delta_s$, so that
	$$\sum_{r\in S: rs=s}e_\alpha(r)=(e_\alpha\ast \delta_s)(s)\to \delta_s(s)=1.$$
	In particular, for every $s\in S$ there exists $r\in S$ with $rs=s$. Let $K\subseteq S$ be a finite subset with $S=KS$. Choose, for every $t\in K$, an element $r_t\in S$ with $r_tt=t$, and let $K'=\{r_t:t\in K\}$. Then, for every $s\in S$ we can write $s=ts'$ with $t\in K$ and $s'\in S$, and it follows that $r_ts=r_tts'=ts'=s$. Hence $K'$ is finite, and for every $s\in S$ there is $t\in K'$ with $ts=s$ (in particular, $K'S=S$). 
	
	Let $n\coloneq |K'|$ and set $K'_n\coloneq K'$. If every element $t\in K'$ satisfies $t^2=t$, we are done by taking $F=K'_n$. Otherwise, if $t\in K'$ is not an idempotent, there is $t'\in K'$ with $t't=t$ and $t'\neq t$. It follows that every element $s\in S$ satisfying $ts=s$ also satisfies $t's=t'ts=ts=s$, so we can define $K_{n-1}'\coloneq K'_n-\{t\}$ and still for every $s\in S$ there is $t\in K'_{n-1}$ satisfying $ts=s$. By iterating this process, we see that for some $1\leq k\leq n$, $F\coloneq K'_k$ is a subset of $E_S$ and satisfies $S=FS$.
\end{proof}

\begin{example}\label{ex:LAI_not_LI}
	There are examples of semigroups $S$ such that $\ell^1(S)$ does not have a left identity but has a left approximate identity. Take the semigroup $\N_\wedge$ of natural numbers with the operation $n\wedge m=\min\{n,m\}$ for all $n,m\in \N$. The sequence $(\delta_n)_{n\in\N}$ is an approximate identity in $\ell^1(\N_\wedge)$, but there is no left identity element (see \cite[Example 4.10]{dales2010banach}).
\end{example}

In the case where $S$ is a \textbf{cancellative} semigroup, meaning $s=t$ whenever $sr=tr$ or $rs=rt$ for some $r\in S$, the relation between the hypotheses becomes stronger.

\begin{proposition}\label{prop:cancellative_case}
	Let $S$ be a cancellative semigroup. The following assertions are equivalent.
	\begin{enumerate}
		\item[\textup{(i)}] $S$ is expansive.
		\item[\textup{(ii)}] The algebra $\ell^1(S)$ has a left approximate identity.
		\item[\textup{(iii)}] The algebra $\ell^1(S)$ has a left identity.
		\item[\textup{(iv)}] $S$ is a monoid.
	\end{enumerate}
\end{proposition}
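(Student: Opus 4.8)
The plan is to run the cycle $(\text{iv})\Rightarrow(\text{iii})\Rightarrow(\text{ii})\Rightarrow(\text{iv})$ together with $(\text{i})\Leftrightarrow(\text{iv})$, organising everything around one structural fact: in a cancellative semigroup every idempotent is the identity. I would record this first. If $e^2=e$, then for every $x$ we have $ex=e^2x=e(ex)$, so left-cancelling $e$ gives $x=ex$, and symmetrically $xe=xe^2=(xe)e$ right-cancels to $x=xe$; hence $e=1_S$. The upshot is a clean reduction: to prove any of $(\text{i}),(\text{ii}),(\text{iii})\Rightarrow(\text{iv})$ it suffices to exhibit a single idempotent, equivalently a relation $rs=s$ (from which $r^2s=rs$ and right-cancellation give $r^2=r$), or $xv=x$ (from which $xv^2=xv$ and left-cancellation give $v^2=v$).

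Next I would dispatch the easy implications. For $(\text{iv})\Rightarrow(\text{iii})$, $\delta_{1_S}$ is a two-sided identity of $\ell^1(S)$; for $(\text{iii})\Rightarrow(\text{ii})$ a left identity is a constant left approximate identity; and $(\text{iv})\Rightarrow(\text{i})$ holds with $K=\{1_S\}$. To close the triangle I would prove $(\text{ii})\Rightarrow(\text{iv})$ by reusing the computation in the proof of Proposition~\ref{prop:KS=S+app_inv_implies_unital}: a left approximate identity $(e_\alpha)_\alpha$ forces $\sum_{r:\,rs=s}e_\alpha(r)=(e_\alpha\ast\delta_s)(s)\to 1$ for every $s$, so the set $\{r:rs=s\}$ is nonempty; any such $r$ is idempotent, hence the identity, and $S$ is a monoid.

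The main step, and the only genuine obstacle, is $(\text{i})\Rightarrow(\text{iv})$: a cancellative expansive semigroup is a monoid. I would reformulate $S=KS=\bigcup_{k\in K}kS$ as saying that $S$ is a finite union of the principal right ideals $kS$, and again aim only to produce one idempotent. First observe that $k\in kS$ already suffices, since $k=kt$ gives $kt^2=kt$ and left-cancellation yields $t^2=t$. So suppose, toward a contradiction, that $k\notin kS$ for all $k\in K$. Since $k\in S=\bigcup_{k'\in K}k'S$, each $k$ lies in $k'S$ for some $k'\neq k$, defining a fixed-point-free self-map $\sigma$ of the finite set $K$. As $\sigma$ has no fixed point and $K$ is finite, iterating $\sigma$ produces a cycle $k_1\mapsto k_2\mapsto\cdots\mapsto k_l\mapsto k_1$ of length $l\geq 2$, along which $k_j=k_{j+1}v_j$ and $k_l=k_1v_l$. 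Substituting around the cycle collapses to $k_1=k_1(v_lv_{l-1}\cdots v_1)$, i.e. $k_1\in k_1S$, contradicting the assumption. Hence some $k\in kS$, an idempotent exists, and $S$ is a monoid.

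The delicate point is precisely this last argument: the finiteness of $K$ is what converts the otherwise-unbounded descent $s\in k_1k_2\cdots k_nS$ into an honest cycle, via the pigeonhole encoded in the fixed-point-free map $\sigma$, while cancellativity plays a double role in both manufacturing idempotents from relations $xv=x$ and upgrading any idempotent to the identity. In writing this out I would take care to track which side is being cancelled at each step — left-cancellation for ``$xv=x\Rightarrow v^2=v$'' and for the left-identity half, right-cancellation for ``$rs=s\Rightarrow r^2=r$'' and for the right-identity half — since these are the only places the hypotheses are used, and getting a side wrong would break the argument.
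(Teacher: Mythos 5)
Your proof is correct, and it is organized differently from the paper's. The paper disposes of (ii)$\Leftrightarrow$(iv) by citing Gr{\o}nb{\ae}k's result on approximate identities for cancellative semigroup algebras, whereas you prove (ii)$\Rightarrow$(iv) directly via the computation $\sum_{r:\,rs=s}e_\alpha(r)=(e_\alpha\ast\delta_s)(s)\to 1$ (the same one used in Proposition \ref{prop:KS=S+app_inv_implies_unital}), which makes the whole proposition self-contained. For the hard implication ``expansive $\Rightarrow$ monoid'', the paper prunes $K$: it discards any $t\in K$ with $t=t's$ for $t'\in K\setminus\{t\}$ (since then $tS\subseteq t'S$), iterates until every surviving $t$ satisfies $t\in tS$, deduces that all surviving elements are idempotent, and then uses cancellativity twice more to force $|K|=1$ and to upgrade the left identity to a two-sided one. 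You instead isolate the lemma that a cancellative semigroup with an idempotent is a monoid, reduce everything to producing a single relation $xv=x$ or $rs=s$, and extract that relation from $S=KS$ by a pigeonhole on the fixed-point-free self-map $\sigma$ of $K$ (composing along a cycle to get $k_1=k_1(v_l\cdots v_1)$). The two finiteness arguments are the same pigeonhole in different clothing, but your reduction is leaner: one idempotent suffices, so you never need to show that the whole reduced $K$ consists of idempotents or that it is a singleton. Your bookkeeping of which side of cancellativity is used at each step is also accurate throughout.
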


\begin{proof}
	The equivalence between (ii) and (iv) is proven in \cite[Corollary 1.3 and Remark 1.4]{gronbaek1988amenability}, and since (iv) implies (iii) and (iii) implies (ii), the equivalences between (ii), (iii) and (iv) follow. That (iii) implies (i) was proven in Lemma \ref{lemma:unital_integral_ring}, so it just remains to prove that (i) implies (iii).
	
	Let $K\subseteq S$ be finite such that $S=KS$, and note that if $t=t's$ for some $t,t'\in K$ with $t\neq t'$ and $s\in S$, then every $r\in S$ satisfying $r\in tS$ also satisfies $r\in tS=t'sS\subseteq t'S$. Hence, by removing such $t$ from $K$ and iterating the process, we may assume that every $t\in K$ satisfies $t=ts_t$ for some $s_t\in S$. With this, we get by left cancellativity that
	$$t=ts_t=t^2s_t\implies s_t=ts_t=t,$$
	so that $t^2=t$ for every $t\in K$. Now assume $|K|\geq 2$ and let $t_1,t_2\in K$ be different elements. Then $t_1t_2=t_1^2t_2=t_1t_2^2$, so left and right cancellativity yield that $t_1=t_1t_2=t_2$. Hence $|K|=1$, and we write $K=\{t\}$. Note that for every element $s\in S$ it holds that $s=ts'$ for some $s'\in S$, so $ts=tts'=ts'=s$ and we conclude that $t$ is a left identity element. Finally, by right cancellativity and the fact that $st=st^2$ we get $s=st$ for every $s\in S$, and we obtain that $S$ is a monoid, as desired.
\end{proof}

\subsection{Some general examples}
We now give some examples of general classes of semigroups that satisfy the hypotheses from Theorem \ref{thm:meta_characterization} and Theorem \ref{thm:expansivity_invertibility}. Recall that a semigroup $S$ is called \textbf{regular} if for every $s\in S$ there exists $s^*\in S$ with $ss^*s=s$ and $s^*ss^*=s^*$. Also, an element $e\in S$ is called an \textbf{idempotent} if $e^2=e$, and we denote by $E_S$ the set of idempotent elements of $S$. When $S$ is regular, the elements $ss^*$ and $s^*s$ are idempotents for all $s\in S$, and thus $E_SS=SE_S=S$.

Expansivity of $S$, that is, the existence of a finite $K\subseteq S$ with $S=KS$, is immediately implied by any of the following conditions:
\begin{enumerate}
	\item[$\bullet$] $S$ has a left identity,
	\item[$\bullet$] $\ell^1(S)$ has a left identity, more generally, as pointed out in Lemma \ref{lemma:unital_integral_ring},
	\item[$\bullet$] $S$ is regular with a finite set of idempotents,
	\item[$\bullet$] $S$ is regular and finitely generated.
\end{enumerate}
On the other hand, examples of non-expansive semigroups include $\N_\wedge$ from Example \ref{ex:LAI_not_LI} and any cancellative semigroup that is not a monoid, by Proposition \ref{prop:cancellative_case}. 

Moving to the existence of an identity in $\ell^1(S)$, we include the following two examples.

\begin{example}[Inverse semigroups with finitely many idempotents]\label{ex:inverse_semigroups}
	A semigroup $S$ such that every $s\in S$ admits a \textit{unique} $s^*\in S$ with $ss^*s=s$ and $s^*ss^*=s^*$ is called an \textbf{inverse} semigroup. This is equivalent to saying that $S$ is a regular semigroup such that every two idempotents commute with each other (see, e.g., \cite[Theorem 1.17]{Clifford}). 
	
	The following observation is made in \cite[p. 6]{duncan1978amenability}. If $S$ is an inverse semigroup, then $E_S$ is a commutative subsemigroup of $S$. Thus, by \cite[Theorem 5.8]{hewitt19561} the algebra $\ell^1(E_S)$ is semisimple. If moreover $E(S)$ is finite, then $\ell^1(E_S)$ is finite-dimensional and semisimple, hence
	isomorphic to a finite product of simple $\C$-algebras. By the Wedderburn-Artin Theorem, we conclude that $\ell^1(E_S)$ is isomorphic to a finite product $\prod_{i=1}^n\mathrm{M}_{n_i}(D_i)$, where each $D_i$ is a division $\C$-algebra. Hence, $\ell^1(E_S)$ has an identity $e\in\ell^1(E_S)$. Note that the associated element $e\mathbf{1}_{E_S}\in\ell^1(S)$ satisfies
	$$(e\mathbf{1}_{E_S}\ast \delta_r)(s)=\sum_{\substack{t\in E_S,t\in S\\tt'=s}}e(t)\delta_r(t')=\sum_{\substack{t\in E_S\\tr=s}}e(t)=(e\ast \delta_r)(s)\quad \text{ for all }r\in E_S\text{ and }s\in S,$$
	and therefore $e\mathbf{1}_{E_S}\ast\delta_s=(e\mathbf{1}_{E_S}\ast \delta_{ss^*})\ast \delta_{s}= \delta_{ss^*}\ast \delta_{s}= \delta_{s}$ for all $s\in S$.
	Hence, $e\mathbf{1}_{E_S}$ is an identity in $\ell^1(S)$, so $\ell^1(S)$ is unital. 
\end{example}

\begin{example}[Semigroups with an amenable convolution Banach algebra]
	Given a Banach algebra $\mathfrak{A}$, we denote by $\mathfrak{A}\,\hat{\otimes}\,\mathfrak{A}$ the completion of $\mathfrak{A}\otimes \mathfrak{A}$ with respect to the norm
	$$\|u\|=\inf\left\{\sum_{i=1}^{n}\|a_i\|\|b_i\|:u=\sum_{i=1}^{n}a_i\otimes b_i\right\},\quad u\in \mathfrak{A}\otimes \mathfrak{A},$$
	and call it the \textbf{projective tensor product} of $\mathfrak{A}$ and $\mathfrak{A}$. The space $\mathfrak{A}\,\hat{\otimes}\,\mathfrak{A}$ becomes a Banach $\mathfrak{A}$-bimodule via the formulas $a\cdot (u\otimes v)=(au)\otimes v$ and $(u\otimes v)\cdot a=u\otimes (va)$. Letting $\Delta_\mathfrak{A}\colon \mathfrak{A}\,\hat{\otimes}\, \mathfrak{A}\to \mathfrak{A}$ be the map given by $a\otimes b\mapsto ab$, a bounded net $(d_\alpha)_{\alpha}$ in $\mathfrak{A}\,\hat{\otimes}\,\mathfrak{A}$ is called an \textbf{approximate diagonal} if $\Delta_\mathfrak{A}d_\alpha$ is a right approximate identity in $\mathfrak{A}$ and $a\cdot d_\alpha-d_\alpha\cdot a\to 0$ for all $a\in \mathfrak{A}$. A Banach algebra $\mathfrak{A}$ is called \textbf{amenable} if it admits an approximate diagonal. For definitions and an overview of the theory of amenable Banach algebras we refer the reader to \cite{bonsall2012complete,runde2020amenable}.
	
	There is a vast literature regarding the amenability of semigroup convolution Banach algebras \cite{dales2010banach,duncan1978amenability,paterson_duncan_amenability,gronbaek1988amenability,munn1955semigroup}. Amenability of the Banach algebra $\ell^1(S)$ imposes strong conditions upon the semigroup $S$: it is a regular semigroup with finitely many idempotents \cite[Theorem 2]{paterson_duncan_amenability}, and $\ell^1(S)$ is a unital algebra \cite[Corollary 10.6]{dales2010banach}.
\end{example}

\subsection{Rees matrix semigroups}
The following class of semigroups is of fundamental importance in the theory of semigroups. We refer the reader to \cite[Chapter 4]{dales2010banach} for an overview of this class with a special focus on their convolution Banach algebras and concrete examples.

Let $G$ be a group and set $G^0:=G\cup\{0\}$. Let $I,\Lambda$ be finite sets and $\mathbf{P}=(p_{\lambda j})_{\lambda,j}\in \mathrm{M}_{\Lambda\times I}(G^0)$. Consider the set $I\times G^0\times \Lambda$ with the operation
$$(i,g,\lambda)(j,h,\mu)\coloneq (i,gp_{\lambda,j}h,\mu)\quad\text{for all }i,j\in I\text{, }g,h\in G^0\text{ and }\lambda,\mu\in \Lambda.$$
Define the \textbf{Rees matrix semigroup with sandwich matrix} $\mathbf{P}$ as the Rees quotient
$$\mathrm{M}^0(G;I,\Lambda;\mathbf{P})\coloneq (I\times G^0\times \Lambda)\big{/}\{(i,0,\lambda):i\in I,\lambda\in \Lambda\}.$$

\begin{proposition}
	Let $I,\Lambda$ be two finite sets, $G$ be a countable group and $\mathbf{P}\in\mathrm{M}_{\Lambda\times I}(G^0)$. Set $S=\mathrm{M}^0(G;I,\Lambda,\mathbf{P})$. We have the following.
	\begin{enumerate}
		\item[\textup{(i)}] $S$ is expansive if and only if $\mathbf{P}$ has a non-zero entry.
		\item[\textup{(ii)}] There exists a finite $K\subseteq E_S$ with $S=KS$ if and only if every column of $\mathbf{P}$ contains a non-zero entry.
	\end{enumerate}
\end{proposition}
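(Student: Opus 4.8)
The plan is to reduce both equivalences to two elementary facts about the product in $S$, after first determining $E_S$. Writing a general element of $S$ as either $0$ (the common class of all $(i,0,\lambda)$) or $(i,g,\lambda)$ with $g\in G$, I would first compute $(i,g,\lambda)^2=(i,\,gp_{\lambda i}g,\,\lambda)$ to conclude that the nonzero idempotents are exactly the elements $(i,p_{\lambda i}^{-1},\lambda)$ with $p_{\lambda i}\neq 0$, together with $0$. Next I would isolate two observations from the formula $ks=(i,\,g'p_{\lambda' j}h,\,\mu)$ for $k=(i,g',\lambda')$ and $s=(j,h,\mu)$: first, the outer coordinates of a product are inherited from the factors, the first from $k$ and the third from $s$, and $ks=0$ exactly when $p_{\lambda' j}=0$; second, because $G$ is a group and $h$ ranges over all of $G$, the middle coordinate $g'p_{\lambda' j}h$ can be prescribed arbitrarily whenever $p_{\lambda' j}\neq 0$. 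Since also $0\cdot s=0$, the zero element is reached as soon as $0\in K$ and can be ignored in what follows.

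For (i), if $\mathbf{P}$ has a nonzero entry $p_{\lambda_0 j_0}\neq 0$, I would take $K=\{(i,1_G,\lambda_0):i\in I\}\cup\{0\}$, where $1_G$ denotes the identity of $G$; this set is finite since $I$ is. Given any nonzero target $(i,g,\mu)$, the second observation produces $h\in G$ with $(i,1_G,\lambda_0)(j_0,h,\mu)=(i,g,\mu)$, so $S=KS$ and $S$ is expansive. Conversely, if $\mathbf{P}=0$ then by the first observation every product equals $0$, whence $KS\subseteq\{0\}\subsetneq S$ for every $K$, so $S$ is not expansive.

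For (ii) the pivotal remark is that, by the description of $E_S$, a nonzero idempotent with first coordinate $i$ exists if and only if column $i$ of $\mathbf{P}$ has a nonzero entry. If every column has a nonzero entry, then for each $i$ I would fix $\lambda_i$ with $p_{\lambda_i i}\neq 0$ and set $e_i=(i,p_{\lambda_i i}^{-1},\lambda_i)\in E_S$; a one-line check gives $e_i(i,g,\mu)=(i,g,\mu)$, so $K=\{e_i:i\in I\}\cup\{0\}\subseteq E_S$ witnesses $S=KS$. Conversely, if some column $i_0$ vanishes identically, then no nonzero idempotent has first coordinate $i_0$, so for any $K\subseteq E_S$ the first observation forces every nonzero element of $KS$ to have first coordinate different from $i_0$; as $(i_0,1_G,\mu)\in S$, this gives $S\neq KS$.

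All the computations are routine; the only point demanding care is the bookkeeping forced by the Rees quotient, namely correctly handling the single absorbing zero and keeping track of which coordinate of a product is controlled by which factor. This bookkeeping is precisely what produces the row/column asymmetry between (i) and (ii): expansivity constrains only left multipliers, so (i) depends on which first coordinates $K$ can realize at all, and any single nonzero entry of $\mathbf{P}$ suffices for this, whereas (ii) additionally requires those first coordinates to be realized by \emph{idempotents}, which is governed by the columns of $\mathbf{P}$.
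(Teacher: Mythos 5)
Your proof is correct and follows essentially the same route as the paper's: the same witness sets $K=\{(i,1_G,\lambda_0):i\in I\}$ for (i) and $K=\{(i,p_{\lambda(i)i}^{-1},\lambda(i)):i\in I\}$ for (ii), and the same observations that the first coordinate of a nonzero product is inherited from the left factor and that the nonzero idempotents are exactly the $(i,p_{\lambda i}^{-1},\lambda)$ with $p_{\lambda i}\neq 0$. Your explicit bookkeeping of the absorbing zero (noting $0=k\cdot 0\in KS$ automatically) is slightly more careful than the paper's but changes nothing of substance.
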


\begin{proof}
	Write $\mathbf{P}=(p_{\lambda i})_{i,\lambda}$. Let us prove (i). Since $S$ contains a non-zero element, it is clear that if $S=KS$ then $\mathbf{P}$ is non-zero. Conversely, let $i_0\in I$ and $\lambda_0\in \Lambda$ be such that $p_{\lambda_0 i_0}\in G$, and note that $K\coloneq\{(i,1_G,\lambda_0):i\in I\}$ is finite and satisfies
	$$(i,g,\lambda)=(i,1_G,\lambda_0)(i_0,p_{\lambda_0 i_0}^{-1}g,\lambda)\quad\text{for all }(i,g,\lambda)\in S.$$
	Hence $S=KS$. 
	
	To see (ii), note that for $g\in G$ we have that $(i,g,\lambda)$ is an idempotent if and only if $p_{\lambda i}\in G$ and $g=p_{\lambda i}^{-1}$. Thus, if $S=KS$ for some $K\subseteq S$ then for all $i\in I$ there must be some $\lambda\in \Lambda$ with $p_{\lambda i}\in G$. Conversely, assume for each $i\in I$ there is $\lambda(i)\in\Lambda$ such that $p_{\lambda(i) i}\in G$. Then $K\coloneq\{(i,p_{\lambda(i)i}^{-1},\lambda(i)):i\in I\}\subseteq E_S$ is finite and
	$$(i,g,\lambda)=(i,p_{\lambda(i)i}^{-1},\lambda(i))(i,g,\lambda)\quad\text{for all }(i,g,\lambda)\in S,$$
	yielding $S=KS$
\end{proof}

The matrix $\mathbf{P}$ can be regarded as an element of $\mathrm{M}_{\Lambda\times I}(\ell^1(G))$ in a natural way by identifying $g\in G$ with $\delta_g\in\ell^1(G)$, and $0\in G^0$ with $0\in\ell^1(G)$. The results in the following proposition are well-known. See, e.g., pages 68 and 69 in \cite{dales2010banach}.

\begin{proposition}\label{prop:rees_matrix}
	The following statements hold.
	\begin{enumerate}
		\item[\textup{(i)}] The map $(a_{i\lambda})_{i,\lambda}\longmapsto \sum_{(g)_{i,\lambda}}a_{i\lambda}(g)\delta_{(g)_{i\lambda}}$ provides an isometric isomorphism of Banach algebras $\mathrm{M}_{I\times \Lambda}(\ell^1(G))\longrightarrow \ell^1(\mathrm{M}^0(G;I,\Lambda;\mathbf{P}))/\C\delta_0$ with multiplication in $\mathrm{M}_{I\times \Lambda}(\ell^1(G))$ given by $\mathbf{A}\cdot \mathbf{B}\coloneq\mathbf{A}\mathbf{P}\mathbf{B}$ for all $\mathbf{A},\mathbf{B}\in\mathrm{M}_{I\times \Lambda}(\ell^1(G))$. 			
		\item[\textup{(ii)}] The algebra $\ell^1(\mathrm{M}^0(G;I,\Lambda;\mathbf{P}))$ is unital if and only if $|\Lambda|=|I|$ and $\mathbf{P}$ is invertible as an element of $\mathrm{M}_{I\times \Lambda}(\ell^1(G))$.
	\end{enumerate}
\end{proposition}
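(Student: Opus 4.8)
The plan is to treat the two parts separately, building (ii) on the identification established in (i).

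For (i), I would first note that, since the zero element $0\in S$ is one of the basis vectors of $\ell^1(S)$, the quotient $\ell^1(S)/\C\delta_0$ is isometrically the $\ell^1$-space on the nonzero elements $\{(i,g,\lambda):i\in I,\,g\in G,\,\lambda\in\Lambda\}$. Indeed, for $f=\sum_{s}f(s)\delta_s$ the quotient norm $\inf_{c\in\C}\|f+c\delta_0\|_1$ is attained by cancelling the $\delta_0$-coefficient, so it equals $\sum_{i,g,\lambda}|f(i,g,\lambda)|$. Collecting the coefficients at a fixed pair $(i,\lambda)$ into $a_{i\lambda}\in\ell^1(G)$ via $a_{i\lambda}(g)=f(i,g,\lambda)$ gives the stated linear isometric bijection $\Theta$ onto $\mathrm{M}_{I\times\Lambda}(\ell^1(G))$, whose norm is exactly $\sum_{i,\lambda}\|a_{i\lambda}\|_1$. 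The content then is multiplicativity. Under $\Theta$ the basis element $\delta_{(i,g,\lambda)}$ corresponds to the matrix carrying $\delta_g$ in position $(i,\lambda)$ and zero elsewhere, so I would check the two products on these generators: on the semigroup side $\delta_{(i,g,\lambda)}\ast\delta_{(j,h,\mu)}=\delta_{(i,\,gp_{\lambda j}h,\,\mu)}$, which equals $\delta_0$ (hence $0$ in the quotient) precisely when $p_{\lambda j}=0$; on the matrix side the twisted product $\mathbf{A}\cdot\mathbf{B}=\mathbf{A}\mathbf{P}\mathbf{B}$ of the two generators yields the matrix carrying $\delta_g\ast\delta_{p_{\lambda j}}\ast\delta_h=\delta_{gp_{\lambda j}h}$ in position $(i,\mu)$ when $p_{\lambda j}\in G$, and $0$ otherwise. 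These agree, so $\Theta$ is multiplicative on generators, hence on the dense subalgebra of finitely supported elements by bilinearity, and then everywhere by continuity of $\Theta$ and of both products.

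For (ii), the key structural observation is that $\ell^1(S)$ splits as an algebra direct product. Let $\phi\colon\ell^1(S)\to\C$ be the augmentation $\phi(f)=\sum_{s\in S}f(s)$; since $\delta_s\ast\delta_t=\delta_{st}$ forces $\phi(\delta_s\ast\delta_t)=1=\phi(\delta_s)\phi(\delta_t)$, the functional $\phi$ is a character. Because $0$ is a zero for $S$, the element $\delta_0$ is a central idempotent with $f\ast\delta_0=\delta_0\ast f=\phi(f)\delta_0$ and $\phi(\delta_0)=1$. Hence $\ker\phi$ and $\C\delta_0$ are two-sided ideals that are complementary (write $f=(f-\phi(f)\delta_0)+\phi(f)\delta_0$) and annihilate one another (for $f\in\ker\phi$ one has $f\ast(c\delta_0)=c\phi(f)\delta_0=0$), so $\ell^1(S)\cong\ker\phi\times\C\delta_0$ as algebras. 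The quotient map restricts to an algebra isomorphism $\ker\phi\cong\ell^1(S)/\C\delta_0$, which by (i) is the twisted matrix algebra. As $\C\delta_0\cong\C$ is unital, $\ell^1(S)$ is unital if and only if $\bigl(\mathrm{M}_{I\times\Lambda}(\ell^1(G)),\ \mathbf{A}\cdot\mathbf{B}=\mathbf{A}\mathbf{P}\mathbf{B}\bigr)$ is unital.

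It remains to characterize unitality of the twisted algebra, and here I would argue that an identity $\mathbf{U}$ must satisfy $\mathbf{U}\mathbf{P}\mathbf{A}=\mathbf{A}=\mathbf{A}\mathbf{P}\mathbf{U}$ for all $\mathbf{A}$; testing against matrix units gives $\mathbf{U}\mathbf{P}=\mathbf{I}_I$ and $\mathbf{P}\mathbf{U}=\mathbf{I}_\Lambda$, i.e. $\mathbf{U}$ is a two-sided inverse of $\mathbf{P}$ in the ordinary matrix calculus over $\ell^1(G)$, and conversely any such $\mathbf{U}$ is an identity. To force $|I|=|\Lambda|$ I would invoke that $\ell^1(G)$ has invariant basis number: its augmentation $\ell^1(G)\to\C$ is a unital homomorphism onto a field, and applying it entrywise to $\mathbf{U}\mathbf{P}=\mathbf{I}_I$ and $\mathbf{P}\mathbf{U}=\mathbf{I}_\Lambda$ produces mutually inverse rectangular matrices over $\C$, which forces $|I|=|\Lambda|$. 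Once the sizes agree, $\mathbf{U}=\mathbf{P}^{-1}$ and the two conditions are exactly invertibility of $\mathbf{P}$ in $\mathrm{M}_n(\ell^1(G))$ with $n=|I|=|\Lambda|$. The main obstacle is part (ii): recognizing the direct-product splitting off of $\C\delta_0$ so that unitality of $\ell^1(S)$ reduces cleanly to unitality of the twisted matrix algebra (rather than being obscured by the adjoined zero), and supplying the invariant-basis-number input needed to deduce $|I|=|\Lambda|$ rather than merely one-sided inverse data.
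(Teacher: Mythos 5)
Your argument is correct and complete. The paper itself does not prove this proposition---it records both statements as well known and refers to pages 68--69 of \cite{dales2010banach}---so there is no in-text proof to compare against; what you have written is essentially the standard argument from that source. The two points that genuinely need care are exactly the ones you handle: first, the algebra splitting $\ell^1(S)=\ker\phi\oplus\C\delta_0$ coming from the fact that $\delta_0$ is a central idempotent with $f\ast\delta_0=\delta_0\ast f=\phi(f)\delta_0$, which reduces unitality of $\ell^1(S)$ to unitality of the contracted algebra $\ell^1(S)/\C\delta_0\simeq\mathrm{M}_{I\times\Lambda}(\ell^1(G))$ identified in part (i); and second, converting the identity conditions $\mathbf{U}\mathbf{P}=\mathbf{I}_I$ and $\mathbf{P}\mathbf{U}=\mathbf{I}_\Lambda$ into two-sided invertibility of $\mathbf{P}$, where your appeal to the augmentation $\ell^1(G)\to\C$ (applied entrywise, e.g.\ comparing traces of the resulting complex matrices) is precisely the input needed to conclude $|I|=|\Lambda|$ rather than only one-sided inverse data.
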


\begin{remark}
	Examples 10.13, 10.14 and 10.15 in \cite{dales2010banach} provide matrices $\mathbf{P}\in \mathrm{M}_n(G^0)$ that are invertible in $\mathrm{M}_n(\ell^1(G))$, but such that $\mathbf{P}^{-1}\not\in \mathrm{M}_n(\Z[G])$. In these examples, the norm of the identity element of $\ell^1(S)$ belongs to $\Q-\Z$, so that $\Z[S]$ is a non-unital ring, but $\ell^1(S)$ is still a unital algebra.
\end{remark}

\subsection{Disjoint unions of semigroups}
This class of semigroups appears in \cite[Remark 4.7]{ceccherini2025topological}. Let $\Sigma_1,\dots,\Sigma_n$ be semigroups, $z$ be a symbol, and set
$$\amalg(\Sigma_1,\dots,\Sigma_n)\coloneqq\{z\}\sqcup \bigsqcup_{i=1}^n\Sigma_i.$$ 
We turn $\amalg(\Sigma_1,\dots,\Sigma_n)$ into a semigroup with the operation
$$st\coloneq \left\{\begin{array}{ll}
	st & \text{if } s,t\in \Sigma_i \text{ for some }1\leq i\leq n\\
	z & \text{otherwise}.
\end{array}\right.$$ 
Note that no element from $\amalg(\Sigma_1,\dots,\Sigma_n)$ can fix the elements of $\Sigma_1,\dots,\Sigma_n$ at the same time. Thus, $\amalg(\Sigma_1,\dots,\Sigma_n)$ cannot have a left identity.

\begin{proposition}
	Let $n\geq 2$, $\Sigma_1,\dots,\Sigma_n$ be semigroups, and set $S=\amalg(\Sigma_1,\dots,\Sigma_n)$. 
	\begin{enumerate}
		\item[\textup{(i)}] The semigroup $S$ is expansive if and only if each $\Sigma_i$ is expansive.
		\item[\textup{(ii)}] The algebra $\ell^1(S)$ has a left identity if and only if each $\ell^1(\Sigma_i)$ has a left identity. 
	\end{enumerate}
\end{proposition}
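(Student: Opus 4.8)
The plan is to use throughout that $z$ is an absorbing (zero) element of $S$: one has $zs=sz=z$ for all $s$, and a product $st$ lies in some $\Sigma_i$ precisely when $s,t\in\Sigma_i$, being equal to $z$ in every other case. This single observation drives both parts.

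For (i), suppose first that $S=KS$ for a finite $K$. Fixing $i$ and $s\in\Sigma_i$, I would write $s=kt$ with $k\in K$ and $t\in S$; since the product $kt=s$ lands in $\Sigma_i$, both $k$ and $t$ must lie in $\Sigma_i$. Hence $K_i:=K\cap\Sigma_i$ is a finite subset of $\Sigma_i$ with $\Sigma_i=K_i\Sigma_i$, so each $\Sigma_i$ is expansive. For the converse, given finite nonempty $K_i$ with $\Sigma_i=K_i\Sigma_i$, I would set $K:=\bigcup_{i=1}^nK_i$: every $s\in\Sigma_i$ is of the form $kt$ with $k\in K_i$ and $t\in\Sigma_i$, while the absorbing element is reached by $z=kz$ for any $k\in K_1$, so $S=KS$.

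For (ii), the forward direction rests on the fact that restriction to $\Sigma_i$ is a surjective algebra homomorphism $\pi_i\colon\ell^1(S)\to\ell^1(\Sigma_i)$. Indeed, for $s\in\Sigma_i$ only factors in $\Sigma_i$ contribute to $(a\ast b)(s)$, which gives $\pi_i(a\ast b)=\pi_i(a)\ast\pi_i(b)$, and surjectivity follows from the extension-by-zero map $\iota_i$ satisfying $\pi_i\circ\iota_i=\mathrm{id}$. If $e$ is a left identity of $\ell^1(S)$, then for $f\in\ell^1(\Sigma_i)$ one has $\pi_i(e)\ast f=\pi_i(e)\ast\pi_i(\iota_i(f))=\pi_i(e\ast\iota_i(f))=\pi_i(\iota_i(f))=f$, so $\pi_i(e)$ is a left identity of $\ell^1(\Sigma_i)$.

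The converse is where the work lies, and I expect the bookkeeping at $z$ to be the main obstacle. Given left identities $e_i\in\ell^1(\Sigma_i)$, the naive guess $\sum_i\iota_i(e_i)$ already reproduces $a$ correctly on each $\Sigma_j$ (again because products landing in $\Sigma_j$ only involve $\Sigma_j$), but its value at $z$ is off. To quantify this I would invoke the augmentation character $\chi(f)=\sum_r f(r)$, which is multiplicative on each $\ell^1(\Sigma_i)$; since $e_i$ is a left identity, $\chi(e_i)\chi(f)=\chi(f)$ forces $\chi(e_i)=1$. Writing $T:=\sum_{t\in S}a(t)$ and $T_i:=\sum_{t\in\Sigma_i}a(t)$ and grouping $\big(\sum_i\iota_i(e_i)\ast a\big)(z)$ by the component of the left factor, each block contributes $\chi(e_i)(T-T_i)=T-T_i$, for a total of $a(z)+(n-1)T$. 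Since $z$ is absorbing, $\delta_z\ast a$ equals $T$ concentrated at $z$, so adding the correction $(1-n)\delta_z$ cancels the excess exactly; thus $e:=\sum_{i=1}^n\iota_i(e_i)+(1-n)\delta_z\in\ell^1(S)$ is the desired left identity. Note that the hypothesis $n\ge2$ is precisely what makes this correction nontrivial.
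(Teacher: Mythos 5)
Your proposal is correct and follows essentially the same route as the paper: part (i) is handled by the same intersection/union of the $K_i$'s, and in part (ii) your candidate $e=\sum_i\iota_i(e_i)+(1-n)\delta_z$ is exactly the paper's left identity $(1-n,e_1,\dots,e_n)$ under the decomposition $\ell^1(S)\simeq\C\delta_z\oplus\bigoplus_i\ell^1(\Sigma_i)$, with the same use of the augmentation character to get $\chi(e_i)=1$. The only difference is presentational: you compute the convolution at $z$ directly, while the paper first records the product formula in the decomposed coordinates.
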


\begin{proof}
	To see (i), note that the ``only if'' part follows by taking $K_i:=K\cap \Sigma_i$ for each $1\leq i\leq n$, where $K\subseteq S$ is finite such that $S=KS$, and the ``if'' part by taking $K:=K_1\sqcup\dots\sqcup K_n$, where each $K_i\subseteq \Sigma_i$ is finite such that $\Sigma_i=K_i\Sigma_i$. 
	
	Now we prove (ii). Consider, for $1\leq i\leq n$, the character $\varphi_{\Sigma_i}\colon \ell^1(\Sigma_i)\to \C$ given by
	$$\varphi_{\Sigma_i}(a)\coloneq \sum_{t\in S}a(t)\quad\text{for all }a\in \ell^1(\Sigma_i).$$ 
	It can be verified that $\ell^1(S)\simeq \C\oplus \bigoplus_{i=1}^{n}\ell^1(\Sigma_i)$, where the product of $(\alpha,a_1,\dots,a_n)$ and $(\beta,b_1,\dots,b_n)$ is given in the first coordinate by
	\begin{equation}\label{eq:algebra_product}
		\sum_{i=1}^{n}\left[\varphi_{\Sigma_i}(b_i)\left(\alpha+\sum_{j\neq i}\varphi_{\Sigma_j}(a_j)\right)\right]+\beta\left(\alpha+\sum_{i=1}^{n}\varphi_{\Sigma_i}(a_i)\right),
	\end{equation}
	and defined coordinate-wise in the last $n$ coordinates. It is then clear that if $\ell^1(S)$ has a left identity $(\varepsilon,e_1,\dots,e_n)$, then each $e_i$ is a left identity in $\ell^1(\Sigma_i)$.
	
	Conversely, if for each $1\leq i\leq n$ there is a left identity $e_i\in\ell^1(\Sigma_i)$, then the element $e\coloneq (1-n,e_1,\dots,e_n)$ is a left identity in $\ell^1(S)$. Indeed, for all $(\beta,b_1,\dots,b_n)\in \ell^1(S)$ we have, by formula (\ref{eq:algebra_product}), that the first coordinate of the product $(1-n,e_1,\dots,e_n)\cdot (\beta,b_1,\dots,b_n)$ is
	\begin{align*}
		&\sum_{i=1}^{n}\left[\varphi_{\Sigma_i}(b_i)\left(1-n+\sum_{j\neq i}\varphi_{\Sigma_j}(e_j)\right)\right]+\beta\left(1-n+\sum_{i=1}^{n}\varphi_{\Sigma_i}(e_i)\right)\\
		&=\sum_{i=1}^{n}\varphi_{\Sigma_i}(b_i)\left(1-n+n-1\right)+\beta\left(1-n+n\right)=\beta,
	\end{align*}
	where we use the fact that $\varphi_{\Sigma_i}$ is a character, so $\varphi_{\Sigma_i}(e_i)=1$ for all $1\leq i\leq n$. This yields $(1-n,e_1,\dots,e_n)\cdot (\beta,b_1,\dots,b_n)=(\beta,b_1,\dots,b_n)$, hence $\ell^1(S)$ has a left identity.
\end{proof}

\addcontentsline{toc}{section}{Acknowledgements}
\addtocontents{toc}{\protect\setcounter{tocdepth}{1}}
\subsection*{Acknowledgements}
The last stages of this work were funded by the Deutsche Forschungsgemeinschaft (DFG, German Research Foundation) under Germany's Excellence Strategy EXC 2044/2 –390685587, Mathematics Münster: Dynamics–Geometry–Structure.
The author is indebted to Tullio Ceccherini-Silberstein for suggesting the topic of research, fruitful discussions and constant encouragement. Thanks are also due to Santiago G. Rendel and Eduardo Silva for several comments that improved the exposition of this article, and to G. H. Esslamzadeh for correspondence on amenability of semigroup convolution Banach algebras.

\bibliographystyle{alphaabbr}
\bibliography{ReferenciasAA}

\end{document}